\theoremstyle{thmstyleone}%
\newtheorem{theorem}{Theorem}
\theoremstyle{thmstyletwo}%
\newtheorem{remark}{Remark}%
\newtheorem{assumption}{Assumption}%
\theoremstyle{thmstylethree}%
\newtheorem{lemma}{Lemma}
\begin{document}

\title[Article Title]{Bilateral boundary finite-time stabilization of 2 × 2 linear first-order hyperbolic systems with spatially varying coefficients}


\author[1,2]{\fnm{Wei} \sur{Sun}}\email{058056@chu.edu.cn}

\author*[1]{\fnm{Jing} \sur{Li}}\email{lijingmath@xidian.edu.cn}
\equalcont{These authors contributed equally to this work.}

\author[2]{\fnm{Liangyu} \sur{Xu}}\email{058059@chu.edu.cn}
\equalcont{These authors contributed equally to this work.}

\affil*[1]{\orgdiv{School of Mathematics and Statistics}, \orgname{Xidian University}, \orgaddress{\street{Xifeng Road}, \city{Xi'an}, \postcode{710126}, \state{Shaanxi}, \country{People’s Republic of China}}}

\affil[2]{\orgdiv{School of Mathematics and Big Data}, \orgname{Chaohu University}, \orgaddress{\street{Bantang Road}, \city{Hefei}, \postcode{238000}, \state{Anhui}, \country{People’s Republic of China}}}


\abstract{This paper presents bilateral control laws for one-dimensional(1-D) linear 2 × 2 hyperbolic first-order systems (with spatially varying coefficients). Bilateral control means there are two actuators at each end of the domain. This situation becomes more complex as the transport velocities are no longer constant, and this extension is nontrivial. By selecting the appropriate backstepping transformation and target system, the infinite-dimensional backstepping method is extended and a full-state feedback control law is given that ensures the closed-loop system converges to its zero equilibrium in finite time. The design of bilateral controllers enables a potential for fault-tolerant designs.}

\keywords{first-order linear hyperbolic system, bilateral boundary control, backstepping, finite-time convergence}


\pacs[MSC Classification]{35L04, 35L40, 93D15}

\maketitle

\section{Introduction}\label{3:sec1}
The stability and stabilization of hyperbolic systems are primarily addressed through three methods: the characteristic method\cite{bib3.25,bib3.26,bib3.27}, the control Lyapunov function method\cite{bib3.28,bib3.29,bib3.30,bib3.31,bib3.32,bib3.33,bib3.34} and the backstepping method. Among them, the backstepping method has been shown to be a widely used and efficient approach for the control of partial differential equations (PDEs). It was first utilized to develop feedback control laws and observers for 1-D reaction-diffusion PDEs\cite{bib3.1} and has later been extended to various other systems\cite{bib3.2,bib3.3,bib3.4}. One of the representative applications involves the extension of the stabilization problem to 1-D hyperbolic systems\cite{bib3.5,bib3.6,bib3.7,bib3.24} with the introduction of the backstepping method, which makes the explicit design of controllers for 1-D hyperbolic systems possible.

Boundary stabilization of linear 1-D hyperbolic systems is an important topic in control theory, involving multiple fields such as mathematics, physics and engineering. The study of hyperbolic systems is crucial for understanding and controlling various physical processes, such as transmission line\cite{bib3.8}, traffic flow \cite{bib3.9,bib3.10}, heat exchanger\cite{bib3.11,bib3.23} and open channel flow\cite{bib3.12,bib3.13}. The current mainstream research focuses primarily on 1-D hyperbolic systems. Based on the number of actuators, they are mainly divided into unilateral control and bilateral control. 

Unilateral control, as the name suggests, refers to applying actuators at only one side of the boundary. For example, Hu et al. \cite{bib3.14,bib3.15} utilize the Volterra transformation to design an appropriate form for the target system and employ a classical backstepping controller to achieve the finite time convergence of the zero equilibrium for linear hyperbolic systems. Later, a new target system, distinct from the one in \cite{bib3.14}, is presented in \cite{bib3.16}, that achieves the minimum-time convergence to its zero equilibrium. In \cite{bib3.17}, the authors introduce a straightforward and novel proof regarding the optimal finite control time for linear coupled hyperbolic systems with spatially-varying coefficients.

Bilateral control refers to applying actuators on both sides of the boundary. The main advantages of bilateral control lie on 1) although the number of controllers increases, the total amount of controlled quantity may decrease in certain situations\cite{bib3.18}; 2) the control effect will be better, i.e, the convergence time will be shorter\cite{bib3.19}; 3) the robustness of the system will be enhanced, as a bilateral design can be made fault-tolerant. Once a fault is identified, it is sufficient to switch to unilateral control, which means using only the remaining controller. Of course, this design requires additional tools, such as extra observers for fault detection, which may require at least one sensor. From this perspective, this scheme provides a potential for the fault-tolerant control design of the linear hyperbolic systems with spatially varying coefficients. In \cite{bib3.20}, Fredholm transform is used to design a new target system, and then provide a bilateral control scheme on interval $[0,1]$ (i.e., applying actuators at boundaries $w=0$ and $w=1$), which makes a 1-D linear hyperbolic system (with constant transmission speed) converge to zero in minimal time. Subsequently, this result is extended to general heterodirectional hyperbolic systems\cite{bib3.19}. Vazquez and Krstic \cite{bib3.18} study the finite-time convergence problem of 2-state heterodirectional linear hyperbolic systems with equal constant transport velocities and solve the stability issue of bilateral control on interval $[-1,1]$ (i.e.,applying actuators at boundaries $w=-1$ and $w=1$). This method is found not applicable to the cases with unequal transport velocities, mainly because the well-posedness of the kernel equation cannot be guaranteed. Therefore, this method requires to be refined. So mainly built on \cite{bib3.18}, a more general scenario is directly investigated in this paper, where the transport velocity is a function of spatial variables. Additionally, our method will be valid for any constant transport velocities. This expansion is also not trivial because the original target system is no longer applicable, and thus the technical difficulty of this paper mainly lies on designing an appropriate target system to ensure the well-posedness of the kernel equation, while maintaining the excellent performance of the target system.

The structure of the paper is outlined below. Following this introduction, the subsequent section will delve into the model equations and associated notations. Section \ref{3:sec3} will focus on formulating a boundary feedback control law for the hyperbolic system via the backstepping transformation technique. In Section \ref{3:sec4}, a simulation result is given. Lastly, a concise summary of the entire article will be presented in Section \ref{3:sec5}.

\section{System Description and Control Objective}\label{3:sec2}
In this paper, the following $2\times 2$ linear hyperbolic PDE system is primarily considered
\begin{numcases}{}
\frac{\partial u}{\partial t}(w,t)+\lambda(w)\frac{\partial u}{\partial y}(w,t)=a(w)u(w,t)+b(w)v(w,t)\label{3:eq1}\\
\frac{\partial v}{\partial t}(w,t)-\mu(w)\frac{\partial v}{\partial y}(w,t)=c(w)u(w,t)+d(w)v(w,t)\label{3:eq2}\\
u(-1,t)=U_{1}(t),~~~~~v(1,t)=U_{2}(t)\label{3:eq3}
\end{numcases}
where $(w,t)\in\{(w,t)\in \mathbb{R}^{2}|w\in[-1,1], t\geq0\}$, $\lambda(\cdot),\mu(\cdot)\in C^{1}([-1,1]) >0$, coupling coefficients $a(\cdot), b(\cdot),c(\cdot),d(\cdot) \in C([-1,1])$, and $u$ and $v$ are the system states. 

The initial condition of system (\ref{3:eq1})-(\ref{3:eq3}) is 
\begin{align*}
u(w,0)=u_{0}(w),\quad v(w,0)=v_{0}(w),
\end{align*}
where $u_{0}$ and $v_{0}$ in $L^{2}([-1,1])$.
Without loss of generality, we also make the following assumption $a(w)=d(w)=0, \forall w\in[-1,1]$ (such coupling terms can be removed by using a similar coordinate
transformation as presented in \cite{bib3.6} and \cite{bib3.21}).
\begin{assumption}\label{3:assum1}
Assume that $\lambda(w)$ and $\mu(w)$ satisfy one of the following three cases
\begin{numcases}{}
\lambda(w)=\mu(-w), \quad\quad\forall w\in [-1,1]\label{3:eq4}\\
\lambda(w)> \mu(-w), \quad\quad\forall w\in [-1,1] \label{3:eq5}\\
\lambda(w)< \mu(-w), \quad\quad\forall w\in [-1,1].\label{3:eq6}
\end{numcases}
\end{assumption}

\begin{remark}\label{3:rem1}
The discussed bilateral control here and the range of variable $w\in[-1,1]$ will result in the kernel equations becoming more complex. To ensure the uniqueness of the kernel functions, this assumption is necessary.
\end{remark}

\begin{remark}\label{3:rem2}
If $\lambda$ and $\mu$ are constant real numbers, then our conclusion holds for any $\lambda,\mu >0$.
\end{remark}

The primary objective of this paper is to devise feedback control inputs $U_{1}(t)$ and $U_{2}(t)$ to facilitate system (\ref{3:eq1})-(\ref{3:eq3}) reach the zero equilibrium in finite time.

\section{Controller Design}\label{3:sec3}
In this section, a boundary feedback law will be estalished for the hyperbolic system (\ref{3:eq1})-(\ref{3:eq3}). By adopting the backstepping methodology, system (\ref{3:eq1})-(\ref{3:eq3}) will be mapped into a target system that exhibits favorable stability characteristics. Next, we will proceed with the controller design for the three cases (\ref{3:eq4})-(\ref{3:eq6}) respectively.
\subsection{Case 1: $\lambda(w)=\mu(-w)$}\label{3:subsec3.1}
Define the following target system
\begin{numcases}{}
\frac{\partial \alpha}{\partial t}(w,t)+\lambda(w)\frac{\partial \alpha}{\partial w}(w,t)=0\label{3:eq7}\\
\frac{\partial \beta}{\partial t}(w,t)-\mu(w)\frac{\partial \beta}{\partial w}(w,t)=0\label{3:eq8}\\
\alpha(-1,t)=\beta(1,t)=0,\label{3:eq9}
\end{numcases}
where initial condition $\alpha(w,0)=\alpha_{0}(w)$ and $\beta(w,0)=\beta_{0}(w)$ in $L^{2}([-1,1])$. 

To demonstrate the finite-time convergence to the zero equilibrium, the explicit solution of (\ref{3:eq7}) can be derived as follows. First, define
\begin{align}
\phi_{1}(w)=\int_{0}^{w}\frac{1}{\lambda(y)}\mathrm{d}y, \quad w\in[-1,1],\label{3:eq10}\\
\phi_{2}(w)=\int_{0}^{w}\frac{1}{\mu(y)}\mathrm{d}y, \quad w\in[-1,1].\label{3:eq11}
\end{align}

Notice that both $\phi_{1}$ and $\phi_{2}$ are monotonically increasing in terms of $w\in[-1,1]$, hence $\phi_{1}$ and $\phi_{2}$ are invertible. The lemma below illustrates that the target system (\ref{3:eq7})-(\ref{3:eq9}) will converge to the zero equilibrium in finite time.

\begin{lemma}\label{3:lem1}
The system (\ref{3:eq7})-(\ref{3:eq9}) reaches its zero equilibrium in finite time.
\end{lemma}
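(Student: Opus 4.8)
The plan is to solve the decoupled transport equations (\ref{3:eq7}) and (\ref{3:eq8}) explicitly by the method of characteristics, exploiting the change of variables $\phi_1$ and $\phi_2$ introduced in (\ref{3:eq10})--(\ref{3:eq11}) to straighten out the spatially curved characteristic curves. For the $\alpha$-equation, observe that along a characteristic $w=w(t)$ solving $\dot w=\lambda(w)$ one has $\frac{d}{dt}\phi_1(w(t))=\frac{1}{\lambda(w)}\dot w=1$, so $\phi_1(w)-t$ is invariant along characteristics. Since $\phi_1$ is invertible (being $C^1$ and strictly increasing), the solution is $\alpha(w,t)=\alpha_0\bigl(\phi_1^{-1}(\phi_1(w)-t)\bigr)$ whenever $\phi_1(w)-t\geq\phi_1(-1)$, and $\alpha(w,t)=0$ otherwise, the latter being forced by the boundary condition $\alpha(-1,t)=0$ in (\ref{3:eq9}). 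The same reasoning applies to (\ref{3:eq8}): with $\dot w=-\mu(w)$ one gets $\frac{d}{dt}\phi_2(w(t))=-1$, so $\phi_2(w)+t$ is invariant, giving $\beta(w,t)=\beta_0\bigl(\phi_2^{-1}(\phi_2(w)+t)\bigr)$ when $\phi_2(w)+t\leq\phi_2(1)$ and $\beta(w,t)=0$ otherwise, with the boundary condition $\beta(1,t)=0$ doing the work.

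Next I would read off the extinction time from these formulas. For $\alpha$, the argument $\phi_1(w)-t$ falls below $\phi_1(-1)$ for \emph{every} $w\in[-1,1]$ as soon as $t>\phi_1(1)-\phi_1(-1)=\int_{-1}^{1}\frac{1}{\lambda(y)}\,\mathrm{d}y$; beyond this time every characteristic reaching the interior has been seeded at the left boundary $w=-1$, hence carries zero data, so $\alpha\equiv 0$. Likewise $\beta\equiv 0$ once $t>\phi_2(1)-\phi_2(-1)=\int_{-1}^{1}\frac{1}{\mu(y)}\,\mathrm{d}y$. Under Case 1 the relation (\ref{3:eq4}) gives $\lambda(w)=\mu(-w)$, and the substitution $y\mapsto -y$ shows the two integrals coincide, so both components vanish after the common finite time $T=\int_{-1}^{1}\frac{1}{\lambda(y)}\,\mathrm{d}y$.

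The only delicate point is the low regularity: with $\alpha_0,\beta_0\in L^2([-1,1])$ the identities above must be understood in the sense of broad (weak) solutions propagated along characteristics rather than classical solutions. I would justify that, because $\phi_1,\phi_2\in C^1$ with strictly positive derivative, the induced change of variables is bi-Lipschitz on $[-1,1]$ and therefore preserves the $L^2$ structure, making the characteristic shift well defined almost everywhere. Granting this, the extinction of each component is immediate from the fact that for $t>T$ the entire spatial domain is the image of boundary-seeded characteristics carrying zero data, which is the heart of the finite-time argument.

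The main obstacle I anticipate is precisely handling the spatially varying speeds: the characteristics are genuinely curved, so one cannot argue with straight lines as in the constant-coefficient case, and the whole reduction hinges on verifying that $\phi_1$ and $\phi_2$ faithfully linearize the flow while respecting the $L^2$ functional setting. Once the invariants $\phi_1(w)-t$ and $\phi_2(w)+t$ are established, the remainder is a routine bookkeeping of which characteristics originate at the boundary versus in the initial data.
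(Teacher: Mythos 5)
Your proposal is correct and follows essentially the same route as the paper: solving the two decoupled transport equations by characteristics via the invariants $\phi_1(w)-t$ and $\phi_2(w)+t$, reading off the explicit piecewise solutions, and concluding extinction after $t\geq\phi_1(1)-\phi_1(-1)=\phi_2(1)-\phi_2(-1)$ (equality holding under Case~1 by the substitution $y\mapsto -y$). Your added care about interpreting the formulas as broad $L^2$ solutions is a refinement the paper leaves implicit, not a different argument.
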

\begin{proof}
Using the method of characteristics, we can solve the explicit solution of (\ref{3:eq7}) and (\ref{3:eq9}) as
\begin{align*}
\alpha(w,t)=
\begin{cases}
\alpha_{0}(\phi_{1}^{-1}(\phi_{1}(w)-t))  \quad t<\phi_{1}(w)-\phi_{1}(-1)\\
0 \quad\quad\quad\quad\quad\quad\quad\quad~~ t\geq\phi_{1}(w)-\phi_{1}(-1)
\end{cases}
\end{align*}
and
\begin{align*}
\beta(w,t)=
\begin{cases}
\beta_{0}(\phi_{2}^{-1}(\phi_{2}(w)+t))  \quad t<\phi_{2}(1)-\phi_{2}(w)\\
0 \quad\quad\quad\quad\quad\quad\quad\quad~~ t\geq\phi_{2}(1)-\phi_{2}(w).
\end{cases}
\end{align*}
Thus, when $t\geq \mathrm{max}\{\phi_{1}(1)-\phi_{1}(-1), \phi_{2}(1)-\phi_{2}(-1)\}$, where $\phi_{1}(1)-\phi_{1}(-1)=\phi_{2}(1)-\phi_{2}(-1)$, we have $\alpha\equiv\beta\equiv0$.
\end{proof}
\begin{remark}\label{3:rem3}
The zero equilibrium of (\ref{3:eq7})-(\ref{3:eq9}) with any initial conditions $(\alpha_{0},\beta_{0})\in L^{2}([-1,1])$ is exponentially stable in the $L^{2}$ sense. This conclusion is classical, for a similar proof, refer to \cite{bib3.6}. 
\end{remark}
In order to map the original system (\ref{3:eq1})-(\ref{3:eq3}) to target system (\ref{3:eq7})-(\ref{3:eq9}), the following backstepping(Volterra) transformation is considered
\begin{align}
\alpha(w,t)&=-\int_{-w}^{w}u(z,t)L^{11}(z,w)+v(z,t)L^{12}(z,w)\mathrm{d}z+u(w,t),\label{3:eq12}\\
\beta(w,t)&=-\int_{-w}^{w}u(z,t)L^{21}(z,w)+v(z,t)L^{22}(z,w)\mathrm{d}z+v(w,t).\label{3:eq13}
\end{align}

\begin{remark}\label{3:rem4}
Given that the inverse backstepping transformation requires solving a second-kind Volterra integral equation, inverse transform of transformation (\ref{3:eq12})-(\ref{3:eq13}) is invariably feasible under relatively lenient conditions \cite{bib3.22} (such as when the kernel is continuous or even bounded, which it is in our scenario). 
\end{remark}

By differentiating (\ref{3:eq12}) and (\ref{3:eq13}) with respect to $w$ and $t$ and substituting them back into the target system (\ref{3:eq7})-(\ref{3:eq8}), two uncoupled $2\times2$ systems of hyperbolic 1-D equations can be obtained, that the kernels must satisfy.

First, kernels $L^{11}$ and $L^{12}$ safisfy 
\begin{align}
\lambda(w)L_{w}^{11}(z,w)+\lambda(z)L_{z}^{11}(z,w)=-\lambda^{\prime}(z)L^{11}(z,w)-c(z)L^{12}(z,w), \label{3:eq14}\\
\lambda(w)L_{w}^{12}(z,w)-\mu(z)L_{z}^{12}(z,w)=-b(z)L^{11}(z,w)+\mu^{\prime}(z)L^{12}(z,w), \label{3:eq15}
\end{align}
with boundary conditions
\begin{align}
L^{11}(-w,w)=0,\quad\quad L^{12}(w,w)=\frac{b(w)}{\lambda(w)+\mu(w)}\doteq h_{1}(w). \label{3:eq16}
\end{align}

Second, kernels $L^{21}$ and $L^{22}$ safisfy 
\begin{align}
\mu(w)L_{w}^{22}(z,w)+\mu(z)L_{z}^{22}(z,w)=b(z)L^{21}(z,w)-\mu^{\prime}(z)L^{22}(z,w), \label{3:eq17}\\
\mu(w)L_{w}^{21}(z,w)-\lambda(z)L_{z}^{21}(z,w)=\lambda^{\prime}(z)L^{21}(z,w)+c(z)L^{22}(z,w), \label{3:eq18}
\end{align}
with boundary conditions
\begin{align}
L^{22}(-w,w)=0,\quad\quad L^{21}(w,w)=-\frac{c(w)}{\lambda(w)+\mu(w)}\doteq h_{2}(w). \label{3:eq19}
\end{align}
Both equations (\ref{3:eq14})-(\ref{3:eq16}) and (\ref{3:eq17})-(\ref{3:eq19}) evolve separately in domain $E=\{(z,w)\in \mathbb{R}^{2}|-1\leq-w\leq z\leq w\leq 1\}$ (see Fig. \ref{3:fig1}).

The following theorem illustrates the well-posedness of kernel $L$.
\begin{theorem}\label{3:them1}
Hyperbolic PDE (\ref{3:eq14})-(\ref{3:eq19}) exists a unique solution $L^{11},L^{12},L^{21},L^{22} \in C(E)$.
\end{theorem}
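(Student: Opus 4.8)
The plan is to convert the two decoupled kernel systems into equivalent systems of Volterra-type integral equations along characteristics, and then to prove existence and uniqueness of a continuous solution by the method of successive approximations. Because (\ref{3:eq17})-(\ref{3:eq19}) is obtained from (\ref{3:eq14})-(\ref{3:eq16}) by the substitution $\lambda\leftrightarrow\mu$, $(b,c)\mapsto(-c,-b)$ and $(L^{11},L^{12})\mapsto(L^{22},L^{21})$, it suffices to carry out the argument for the pair $(L^{11},L^{12})$; the statement for $(L^{21},L^{22})$ then follows verbatim.

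First I would straighten the two transport operators using $\phi_1,\phi_2$ from (\ref{3:eq10})-(\ref{3:eq11}): the change of variables turns $\lambda(w)\partial_w+\lambda(z)\partial_z$ into differentiation along the lines $\phi_1(w)-\phi_1(z)=\mathrm{const}$, and $\lambda(w)\partial_w-\mu(z)\partial_z$ into differentiation along $\phi_1(w)+\phi_2(z)=\mathrm{const}$. I then follow each characteristic through an interior point of $E$ back to the edge carrying its data---the $L^{11}$-characteristics to the edge $z=-w$ where $L^{11}=0$, and the $L^{12}$-characteristics to the diagonal $z=w$ where $L^{12}=h_1$. Integrating (\ref{3:eq14}) and (\ref{3:eq15}) along these segments and inserting the boundary values (\ref{3:eq16}) produces a coupled pair of integral equations of the schematic form
\begin{align*}
L^{11}(z,w) &= \int \big(-\lambda'(\cdot)L^{11}-c(\cdot)L^{12}\big)\,\mathrm{d}s,\\
L^{12}(z,w) &= h_1(\sigma(z,w)) + \int \big(-b(\cdot)L^{11}+\mu'(\cdot)L^{12}\big)\,\mathrm{d}s,
\end{align*}
where each integral runs over the relevant characteristic segment contained in $E$ and $\sigma(z,w)$ is the foot of the $L^{12}$-characteristic on the diagonal.

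Next I would set this up as a fixed-point problem $\mathbf{L}=\mathcal{F}[\mathbf{L}]$ for $\mathbf{L}=(L^{11},L^{12})\in C(E)^2$ and iterate, starting from $\mathbf{L}^{(0)}=(0,h_1(\sigma))$ and $\mathbf{L}^{(n+1)}=\mathcal{F}[\mathbf{L}^{(n)}]$. Letting $M$ bound $|b|,|c|,|\lambda'|,|\mu'|$ on the compact set $E$, an induction on $n$ yields the standard factorial majorant
\[
\big|\mathbf{L}^{(n+1)}(z,w)-\mathbf{L}^{(n)}(z,w)\big|\le C\,\frac{\big(M\,G(z,w)\big)^{n}}{n!},
\]
where $G$ is a nonnegative, monotone ``accumulated path-length'' function and $C$ controls $\|h_1\|_\infty$. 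Hence the telescoping series $\sum_n(\mathbf{L}^{(n+1)}-\mathbf{L}^{(n)})$ converges absolutely and uniformly on $E$; its limit is continuous, solves the integral system, and---upon differentiating back along characteristics---satisfies (\ref{3:eq14})-(\ref{3:eq16}). Uniqueness follows from the same bound: the difference of two continuous solutions obeys the homogeneous integral equation, so its sup-norm is dominated by $C(MG)^n/n!\to 0$ and must vanish.

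The step I expect to be the main obstacle is not the iteration but the geometric fact underlying the well-definedness of $\mathcal{F}$: I must verify that every characteristic issued from an interior point of $E$ meets exactly the boundary edge on which data are prescribed, and does so without leaving $E$. This is precisely where the Case~1 hypothesis $\lambda(w)=\mu(-w)$ is used: it forces the identity $\phi_1(-w)=-\phi_2(w)$, which makes the $L^{11}$- and $L^{12}$-characteristics---travelling at the two different speeds $\lambda$ and $\mu$---compatible with the single diagonal $z=w$ and the edge $z=-w$, so that the data in (\ref{3:eq16}) determine the kernels uniquely. Without this compatibility a characteristic could exit $E$ or fail to reach a data-carrying edge, leaving the kernel over- or under-determined; this is the content of Remark~\ref{3:rem1}.
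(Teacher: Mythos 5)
Your proposal is correct and follows essentially the same route as the paper: integrating along the straightened characteristics determined by $\phi_1,\phi_2$ (with $L^{11}$-data on $z=-w$ and $L^{12}$-data on the diagonal), converting to Volterra-type integral equations, running successive approximations with a factorial majorant, and deducing uniqueness from the homogeneous equation and continuity from uniform convergence, just as the paper does via its operator $\mathbf{T}$, increments $\Delta M^{d}$, and Lemmas~\ref{3:lem2}--\ref{3:lem3}. Your closing observation that $\lambda(w)=\mu(-w)$ forces $\phi_1(-w)=-\phi_2(w)$, so every $L^{12}$-characteristic reaches the diagonal without exiting through $z=-w$, is exactly the geometric fact underpinning the paper's use of $\phi_3^{-1}(\phi_1(w)+\phi_2(z))$ in Case~1 (and the reason Case~2 needs the extra condition $L^{12}(-w,w)=0$), even though the paper leaves it implicit.
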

\begin{proof}
Noting that equations (\ref{3:eq14})-(\ref{3:eq16}) and (\ref{3:eq17})-(\ref{3:eq19}) share the same structure, we will only present the proof of the well-posedness for kernels $L^{11}$ and $L^{12}$ here. The well-posedness for kernels $L^{21}$ and $L^{22}$ can be derived similarly.

Define $E=E_{1}\cup E_{2}=\{(z,w)\in \mathbb{R}^{2}|-w\leq z\leq w, 0\leq w\leq 1\}\cup \{(z,w)\in \mathbb{R}^{2}|w\leq z\leq -w, -1\leq w\leq 0\}$. The kernel equations evolve on $E_{1}$ and $E_{2}$, respectively. It can be clearly seen that if the kernel equations are well-posedness in domain $E_{1}$, it must also hold in $E_{2}$  due to a symmetry argument. Specifically, switching the variables from $(z,w)$ to $(\tilde{z},\tilde{w})=(z,-w)$ can transform domain $E_{2}$ into $E_{1}$ and preserve the structure of the equations, except for a change in symbol.

To transform PDEs (\ref{3:eq14})-(\ref{3:eq15}) into integral equations by using the method of characteristics, it is necessary to define
\begin{align}
\phi_{3}(w)=\phi_{1}(w)+\phi_{2}(w), \label{3:eq20}
\end{align}
where $\phi_{1}$ and $\phi_{2}$ are defined by (\ref{3:eq10}) and (\ref{3:eq11}). Note that $\phi_{3}$ is also monotonically increasing and invertible, due to the definition of $\phi_{1}$ and $\phi_{2}$. Furthermore, one can see that $\phi_{i},\phi_{i}^{-1}\in C^{1}([-1,1]),i=1,2,3.$
\begin{figure}
  \centering
  \includegraphics[width=7.5cm,height=7.0cm]{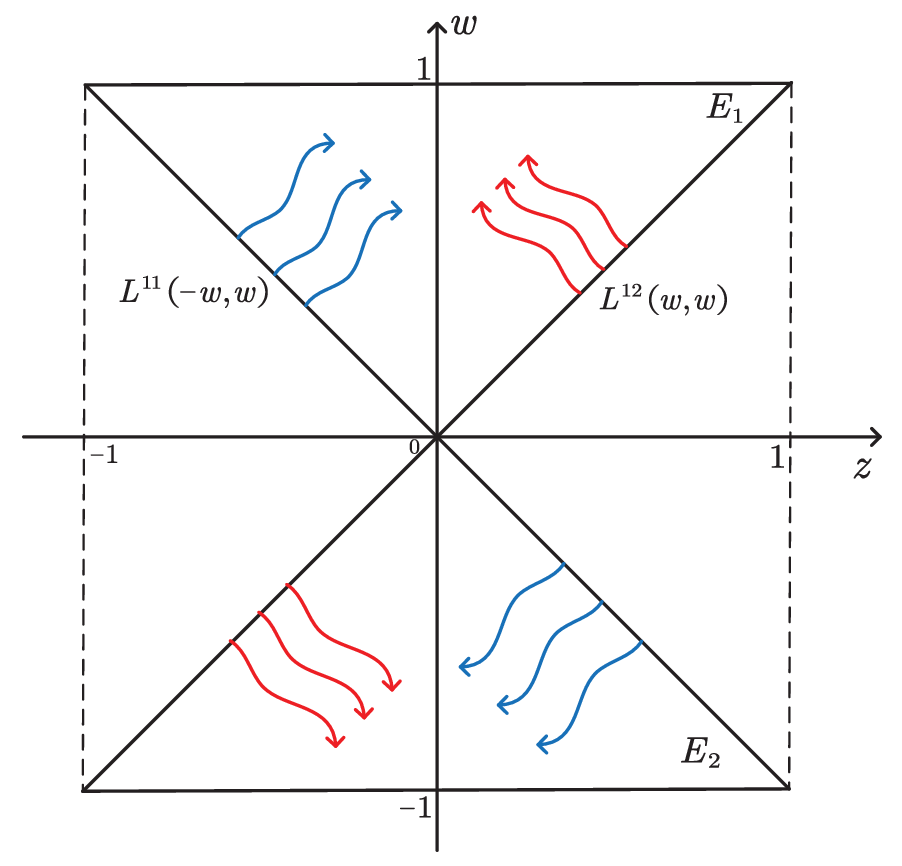}
  \caption{Characteristics lines for the kernel equations. Blue: characteristic lines for $L^{11}$. Red: characteristic lines for $L^{12}$.}\label{3:fig1}
\end{figure}

Characteristics of the $L^{11}$ kernel: $\forall (z,w)\in E_{1}$, define the following characteristic lines $(z_{1}(z,w,\cdot),w_{1}(z,w,\cdot))$
\begin{align}
&\begin{cases}\label{3:eq21}
\frac{\mathrm{d}w_{1}}{\mathrm{d}t}(z,w,t)=\lambda(w_{1}(z,w,t)),\quad t\in[0,t_{1}^{F}(z,w)],\\
w_{1}(z,w,0)=w_{1}^{0}(z,w),\quad w_{1}(z,w,t_{1}^{F}(z,w))=w,
\end{cases}\\
&\begin{cases}\label{3:eq22}
\frac{\mathrm{d}z_{1}}{\mathrm{d}t}(z,w,t)=\lambda(z_{1}(z,w,t)),\quad t\in[0,t_{1}^{F}(z,w)],\\
z_{1}(z,w,0)=z_{1}^{0}(z,w)=-w_{1}^{0}(z,w),\quad z_{1}(z,w,t_{1}^{F}(z,w))=z.
\end{cases}
\end{align}
These lines (see Fig. \ref{3:fig1}) start at coordinate $(-w_{1}^{0}(z,w),w_{1}^{0}(z,w))$ and end at position $(z,w)$. Further, we can calculate that
\begin{align}
\begin{cases}\label{3:eq23}
w_{1}(z,w,t)=\phi_{1}^{-1}(t+\frac{\phi_{1}(w)-\phi_{1}(z)}{2})\\
z_{1}(z,w,t)=\phi_{1}^{-1}(t+\frac{\phi_{1}(z)-\phi_{1}(w)}{2})
\end{cases}
\end{align}
where $t\in[0,t_{1}^{F}(z,w)]$ with
\begin{align}\label{3:eq24}
t_{1}^{F}(z,w)=\frac{\phi_{1}(z)+\phi_{1}(w)}{2}.
\end{align} 

Characteristics of the $L^{12}$ kernel: $\forall (z,w)\in E_{1}$, define the following characteristic lines $(z_{2}(z,w,\cdot),w_{2}(z,w,\cdot))$
\begin{align}
&\begin{cases}\label{3:eq25}
\frac{\mathrm{d}w_{2}}{\mathrm{d}t}(z,w,t)=\lambda(w_{2}(z,w,t)),\quad t\in[0,t_{2}^{F}(z,w)],\\
w_{2}(z,w,0)=w_{2}^{0}(z,w),\quad w_{2}(z,w,t_{2}^{F}(z,w))=w,
\end{cases}\\
&\begin{cases}\label{3:eq26}
\frac{\mathrm{d}z_{2}}{\mathrm{d}t}(z,w,t)=-\mu(z_{2}(z,w,t)),\quad t\in[0,t_{2}^{F}(z,w)],\\
z_{2}(z,w,0)=z_{2}^{0}(z,w)=w_{2}^{0}(z,w),\quad z_{2}(z,w,t_{2}^{F}(z,w))=z.
\end{cases}
\end{align}
These lines (see Fig. \ref{3:fig1}) start at coordinate $(w_{2}^{0}(z,w),w_{2}^{0}(z,w))$ and end at position $(z,w)$. Further, we also can calculate that
\begin{align}
\begin{cases}\label{3:eq27}
w_{2}(z,w,t)=\phi_{1}^{-1}(t+\phi_{1}(\phi_{3}^{-1}(\phi_{1}(w)+\phi_{2}(z))))\\
z_{2}(z,w,t)=\phi_{2}^{-1}(-t+\phi_{2}(\phi_{3}^{-1}(\phi_{1}(w)+\phi_{2}(z))))
\end{cases}
\end{align}
where $t\in[0,t_{2}^{F}(z,w)]$ with
\begin{align}\label{3:eq28}
t_{2}^{F}(z,w)=\phi_{1}(w)-\phi_{1}(\phi_{3}^{-1}(\phi_{1}(w)+\phi_{2}(z))).
\end{align}

By integrating the kernel equations (\ref{3:eq14}) and (\ref{3:eq15}) along with their respective characteristic lines and utilizing the boundary conditions (\ref{3:eq16}), we obtain
\begin{align*}
L^{11}(z,w)=\int_{0}^{t_{1}^{F}(z,w)}&[-\lambda^{\prime}(z_{1}(z,w,t))L^{11}(z_{1}(z,w,t),w_{1}(z,w,t))\\
&-c(z_{1}(z,w,t))L^{12}(z_{1}(z,w,t),w_{1}(z,w,t))]\mathrm{d}t
\end{align*}
and
\begin{align*}
L^{12}(z,w)=h_{1}(\phi^{-1}_{3}(\phi_{1}(w)+\phi_{2}(z)))+&\int_{0}^{t_{2}^{F}(z,w)}\!\!\!\!\![-b(z_{2}(z,w,t))L^{11}(z_{2}(z,w,t),w_{2}(z,w,t))\\
&+\mu^{\prime}(z_{2}(z,w,t))L^{12}(z_{2}(z,w,t),w_{2}(z,w,t))]\mathrm{d}t.
\end{align*}

The integral equations can be resolved by using the method of successive approximations. First, define
\begin{align*}
M=
\begin{pmatrix}
  L^{11} \\
  L^{12} 
\end{pmatrix}
\doteq
\begin{pmatrix}
  M_{1} \\
  M_{2} 
\end{pmatrix}
\quad\rm{and}\quad
\varphi=
\begin{pmatrix}
  0 \\
  h_{1}(\phi^{-1}_{3}(\phi_{1}(w)+\phi_{2}(z)))
\end{pmatrix}.
\end{align*}

Consider the following operator acting on $M$:
\begin{align*}
T_{1}[M](z,w)=&\int_{0}^{t_{1}^{F}(z,w)}[-\lambda^{\prime}(z_{1}(z,w,t))L^{11}(z_{1}(z,w,t),w_{1}(z,w,t))\\
&-c(z_{1}(z,w,t))L^{12}(z_{1}(z,w,t),w_{1}(z,w,t))]\mathrm{d}t,\\
T_{2}[M](z,w)=&\int_{0}^{t_{2}^{F}(z,w)}\!\!\!\!\![-b(z_{2}(z,w,t))L^{11}(z_{2}(z,w,t),w_{2}(z,w,t))\\
&+\mu^{\prime}(z_{2}(z,w,t))L^{12}(z_{2}(z,w,t),w_{2}(z,w,t))]\mathrm{d}t.
\end{align*}
Then define
\begin{align*}
M^{0}(z,w)=\varphi(z,w),\quad\quad M^{d}(z,w)=\varphi(z,w)+\mathbf{T}[M^{d-1}](z,w) \quad(d\geq1)
\end{align*}
where
\begin{align*}
\mathbf{T}=
\begin{pmatrix}
  T_{1} \\
  T_{2}
\end{pmatrix}.
\end{align*}

Notice that if $\lim_{d\rightarrow \infty}M^{d}(z,w)$ exists, then $M=\lim_{d\rightarrow \infty}M^{d}(z,w)$ is a solution of the integral equation (and naturally, a solution of the kernel equation as well). Define increment $\Delta M^{d}=M^{d}-M^{d-1}(d\geq1)$ with $\Delta M^{0}=\varphi(z,w)$. By the linearity of operator $\mathbf{T}$, the following equation $\Delta M^{d}(z,w)=\mathbf{T}[\Delta M^{d-1}](z,w)$ holds. By using the definition of $\Delta M^{d}$, it follows that if $\sum_{d=0}^{+\infty}\Delta M^{d}(z,w)$ converges, then
\begin{align}\label{3:eq29}
M(z,w)=\sum_{d=0}^{+\infty}\Delta M^{d}(z,w).
\end{align}

Next, we will prove that the series $ \sum_{d=0}^{+\infty}\Delta M^{d}(z,w) $ converges. First, define some constants as follows
\begin{align*}
a=&\max_{-1\leq w\leq 1}\bigg\{\frac{1}{\lambda(w)},\frac{1}{\mu(w)}\bigg\},  \quad\quad \bar{h}=\max_{-1\leq w\leq 1}\{|h_{1}(w)|,|h_{2}(w)|\},\\
b_{1}=&\max_{-1\leq z\leq 1}\{|\lambda^{\prime}(z)|,|b(z)|\}, \quad\quad b_{2}=\max_{-1\leq z\leq 1}\{|\mu^{\prime}(z)|,|c(z)|\},\\
b=&b_{1}+b_{2}.
\end{align*}
Next, two lemmas need to be presented in advance.
\begin{lemma}\label{3:lem2}
For $i=1,2$, $t_{i}^{F}(z,w), w_{i}(z,w,t)$ defined in (\ref{3:eq23})-(\ref{3:eq24}) and (\ref{3:eq27})-(\ref{3:eq28}) with $d\geq1, (z,w)\in E_{1}$, it follows that
\begin{align*}
\int_{0}^{t_{i}^{F}(z,w)}w_{i}^{d}(z,w,t)\mathrm{d}t\leq a \frac{w^{d+1}}{d+1}.
\end{align*}
\end{lemma}
\begin{proof}
We only show the proof for $i=1$, which is similar for $i=2$. By equation (\ref{3:eq23}), one can write
\begin{align*}
\int_{0}^{t_{1}^{F}(z,w)}w_{1}^{d}(z,w,t)\mathrm{d}t=\int_{0}^{t_{1}^{F}(z,w)}[\phi_{1}^{-1}(t+\frac{\phi_{1}(w)-\phi_{1}(z)}{2})]^{d}\mathrm{d}t
\end{align*}
Applying a coordinate transformation $x=\phi_{1}^{-1}(t+\frac{\phi_{1}(w)-\phi_{1}(z)}{2})$ to the above integral, one can see that $x\in[\phi_{1}^{-1}(\frac{\phi_{1}(w)-\phi_{1}(z)}{2}),w]$. Then, taking the derivative of $x$ with respect to variable $t$ follows
\begin{align*}
\frac{\mathrm{d}x}{\mathrm{d}t}=\frac{1}{\phi_{1}^{\prime}(x)}=\lambda(x),
\end{align*}
and thus, the integral can be rewritten as 
\begin{align*}
\int_{0}^{t_{1}^{F}(z,w)}[\phi_{1}^{-1}(t+\frac{\phi_{1}(w)-\phi_{1}(z)}{2})]^{d}\mathrm{d}t&=\int_{\phi_{1}^{-1}(\frac{\phi_{1}(w)-\phi_{1}(z)}{2})}^{w}\frac{x^{d}}{\lambda(x)}\mathrm{d}x\\
&\leq a\int_{0}^{w}x^{d}\mathrm{d}x\\
&=a\frac{w^{d+1}}{d+1}.
\end{align*}
\end{proof}
\begin{lemma}\label{3:lem3}
For $i=1,2, d\geq1$ and $(z,w)\in E_{1},$ assume that
\begin{align*}
|\Delta M_{i}^{d}(z,w)|\leq \bar{h}\frac{a^{d}b^{d}w^{d}}{d!},
\end{align*}
and then it follows that
\begin{align*}
|\Delta M_{i}^{d+1}(z,w)|\leq \bar{h}\frac{a^{d+1}b^{d+1}w^{d+1}}{(d+1)!}.
\end{align*}
\end{lemma}
\begin{proof}
For $i=1$, by using Lemma \ref{3:lem2}, it can be acquired
\begin{align*}
|\Delta M_{1}^{d+1}(z,w)|&=|T_{1}[\Delta M^{d}](z,w)|\\
&\leq\int_{0}^{t_{1}^{F}(z,w)}[b_{1}|\Delta M_{1}^{d}(z_{1}(z,w,t),w_{1}(z,w,t))|\\
&\quad\quad\quad\quad\quad+b_{2}|\Delta M_{2}^{d}(z_{1}(z,w,t),w_{1}(z,w,t))|]\mathrm{d}t\\
&\leq\int_{0}^{t_{1}^{F}(z,w)}\bar{h}\frac{a^{d}b^{d+1}}{d!}w_{1}^{d}(z,w,t)\mathrm{d}t\\
&\leq \bar{h}\frac{a^{d+1}b^{d+1}w^{d+1}}{(d+1)!}.
\end{align*}

The detailed proof for case $i=2$ is similar to the one of $i=1$, which is omitted here consequently.
\end{proof}

$\forall i=1,2$, it is easy to find $|\Delta M_{i}^{0}(z,w)|\leq \bar{h}=\bar{h}\frac{a^{0}b^{0}w^{0}}{(0)!}$. By Lemma \ref{3:lem3} and induction, one can have
\begin{align*}
|\Delta M_{i}^{d}(z,w)|\leq \bar{h}\frac{a^{d}b^{d}w^{d}}{d!},\quad d=0,1,2,\cdot\cdot\cdot.
\end{align*}
And thus
\begin{align}\label{3:eq30}
\bigg|\sum_{d=0}^{+\infty}\Delta M_{i}^{d}(z,w)\bigg|\leq \bar{h}e^{abw}.
\end{align}
Now, it can be concluded that the series $ \sum_{d=0}^{+\infty}\Delta M^{d}(z,w) $ is bounded and converges uniformly. Thus, a bounded solution of (\ref{3:eq14})-(\ref{3:eq16}) exists. So far, the existence part of Theorem \ref{3:them1}.

The statement of the uniqueness of the kernel is easy, indeed, if there are two different solutions $\bar{M}(z,w)$ and $\tilde{M}(z,w)$ of equations (\ref{3:eq14})-(\ref{3:eq16}). Define $M(z,w)=\bar{M}(z,w)-\tilde{M}(z,w)$. By linearity of (\ref{3:eq14})-(\ref{3:eq16}),  $M(z,w)$ also satisfies (\ref{3:eq14})-(\ref{3:eq16}), with $h_{1}(w)=0$ ($h_{2}(w)=0$ is similar). Then $\bar{h}=0$, and then it can be concluded $M(z,w)=0$ from (\ref{3:eq30}), which implies that $\bar{M}(z,w)=\tilde{M}(z,w)$.

To demonstrate the continuity of the solution, it is essential to observe the continuity of each individual term, i.e., $\forall d=0,1,2,\dots, \Delta M^{d}(z,w)\in C(E_{1})$, which can be established on account of the uniform convergence of equation (\ref{3:eq29}). First, for $d=0$, it is easy to see that $\Delta M^{0}=\varphi(z,w)\in C(E_{1})$. Assume that $\Delta M^{d}\in C(E_{1}), d\geq1$, and then $\Delta M^{d+1}=\mathbf{T}[\Delta M^{d}]\in C(E_{1})$. Indeed, by the definition of $\mathbf{T}$ (the integral of a continuous function), it is known that $\Delta M^{d+1}\in C(E_{1})$ is continuous. Thus, by induction, the continuity of the solution has been proven.

Finally, the proof of Theorem \ref{3:them1} is completed.
\end{proof}

Now it is ready to state the stabilization result of Section \ref{3:subsec3.1} as follows.
\begin{theorem}\label{3:them2}
Consider system (\ref{3:eq1})-(\ref{3:eq3}) with the following feedback control laws
\begin{align}
  U_{1}(t) & =-\int_{-1}^{1}u(z,t)L^{11}(z,-1)\mathrm{d}z-\int_{-1}^{1}v(z,t)L^{12}(z,-1)\mathrm{d}z, \label{3:eq31}\\
  U_{2}(t) & =\int_{-1}^{1}u(z,t)L^{21}(z,1)\mathrm{d}z+\int_{-1}^{1}v(z,t)L^{22}(z,1)\mathrm{d}z. \label{3:eq32}
\end{align}
For any $(u_{0},v_{0})\in L^{2}([0,1])$, the zero equilibrium state is reached in finite time.
\end{theorem}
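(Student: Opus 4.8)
The plan is to exploit the equivalence, induced by the backstepping transformation, between the closed-loop original system and the target system, and then to transport the finite-time convergence of the latter (Lemma \ref{3:lem1}) back to the former. Concretely, I would show that (i) the transformation (\ref{3:eq12})--(\ref{3:eq13}) maps $(u,v)$ onto a pair $(\alpha,\beta)$ solving the target dynamics (\ref{3:eq7})--(\ref{3:eq8}), (ii) the feedback laws (\ref{3:eq31})--(\ref{3:eq32}) enforce exactly the target boundary conditions (\ref{3:eq9}), and (iii) the transformation is boundedly invertible (Remark \ref{3:rem4}), so that $(\alpha,\beta)\equiv 0$ forces $(u,v)\equiv 0$.

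For step (i), I would differentiate (\ref{3:eq12})--(\ref{3:eq13}) in $t$ and in $w$, substitute the system dynamics (\ref{3:eq1})--(\ref{3:eq2}), and integrate by parts. This is precisely the computation that produced the kernel equations (\ref{3:eq14})--(\ref{3:eq19}): with the kernels $L^{ij}\in C(E)$ supplied by Theorem \ref{3:them1}, the interior and boundary terms cancel so that $(\alpha,\beta)$ obeys the decoupled transport equations (\ref{3:eq7})--(\ref{3:eq8}). Since $u_{0},v_{0}\in L^{2}([-1,1])$ and the kernels are continuous, the induced initial data $(\alpha_{0},\beta_{0})$ lie in $L^{2}([-1,1])$, so Lemma \ref{3:lem1} is applicable.

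For step (ii), I would evaluate the transformation at the endpoints, paying attention to the fact that the integration limits $-w$ and $w$ exchange order there. At $w=-1$ one has $\int_{-w}^{w}=\int_{1}^{-1}=-\int_{-1}^{1}$, so that
\begin{align*}
\alpha(-1,t)=U_{1}(t)+\int_{-1}^{1}\bigl[u(z,t)L^{11}(z,-1)+v(z,t)L^{12}(z,-1)\bigr]\mathrm{d}z,
\end{align*}
and imposing $\alpha(-1,t)=0$ recovers (\ref{3:eq31}); likewise, at $w=1$ one gets
\begin{align*}
\beta(1,t)=U_{2}(t)-\int_{-1}^{1}\bigl[u(z,t)L^{21}(z,1)+v(z,t)L^{22}(z,1)\bigr]\mathrm{d}z,
\end{align*}
and $\beta(1,t)=0$ yields (\ref{3:eq32}). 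Hence the closed loop is mapped exactly onto (\ref{3:eq7})--(\ref{3:eq9}). Invoking Lemma \ref{3:lem1}, $\alpha\equiv\beta\equiv0$ for $t\geq T:=\phi_{1}(1)-\phi_{1}(-1)=\phi_{2}(1)-\phi_{2}(-1)$, the two times coinciding in Case 1 because $\lambda(w)=\mu(-w)$. Finally, by the invertibility of the Volterra transformation (Remark \ref{3:rem4}), $(u,v)$ is recovered from $(\alpha,\beta)$ by a bounded operator, so $(u,v)\equiv0$ for $t\geq T$, which is the claimed finite-time convergence.

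The main obstacle I expect is not a single calculation but the rigorous justification of these formal steps for $L^{2}$ data: establishing well-posedness of the closed-loop system (so that the differentiation under the integral sign and the integration by parts in step (i) are legitimate, possibly first for smooth data and then by a density argument), together with the careful bookkeeping of the swapped integration limits at $w=\pm1$ in step (ii), which is the most error-prone point in matching the controllers to the target boundary conditions.
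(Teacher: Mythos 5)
Your proposal is correct and follows essentially the same route as the paper's own proof: evaluate the transformation (\ref{3:eq12})--(\ref{3:eq13}) at $w=-1$ and $w=1$ to see that the feedback laws (\ref{3:eq31})--(\ref{3:eq32}) enforce the target boundary conditions (\ref{3:eq9}), then invoke the invertibility of the Volterra transformation (Remark \ref{3:rem4}) and Lemma \ref{3:lem1} to transport the finite-time convergence of $(\alpha,\beta)$ back to $(u,v)$. Your sign bookkeeping at the endpoints (the reversal $\int_{1}^{-1}=-\int_{-1}^{1}$ at $w=-1$) is correct and in fact spells out a detail the paper leaves implicit, as does your remark about justifying the formal computations for $L^{2}$ data by density.
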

\begin{proof}
To obtain control laws (\ref{3:eq31}) and (\ref{3:eq32}), substitute $ w = -1 $ and $ w = 1 $ into the transformations (\ref{3:eq12}) and (\ref{3:eq13}), respectively. According to Remark \ref{3:rem4}, we know that transformations (\ref{3:eq12}) -(\ref{3:eq13}) is invertible, when applying control law (\ref{3:eq31})-(\ref{3:eq32}) the dynamical behavior of original system (\ref{3:eq1})-(\ref{3:eq3}) is the same as the behavior of target system (\ref{3:eq7})-(\ref{3:eq9}). Lemma \ref{3:lem1} implies that $(\alpha,\beta)$ goes to zero in finite time. Therefore, $(u,v)$ also converge to the zero equilibrium state in finite time.
\end{proof}

\subsection{Case 2: $\lambda(w)>\mu(-w)$}\label{3:subsec3.2}
In this case, a new target system will be used, which is different from Case 1
\begin{numcases}{}
\frac{\partial \alpha}{\partial t}(w,t)+\lambda(w)\frac{\partial \alpha}{\partial w}(w,t)=0\label{3:eq33}\\
\frac{\partial \beta}{\partial t}(w,t)-\mu(w)\frac{\partial \beta}{\partial w}(w,t)=p(w)\alpha(-w,t)+\int_{-w}^{w}D^{+}(z,w)\alpha(z,t)\mathrm{d}z\nonumber\\
\quad\quad\quad\quad\quad\quad\quad\quad\quad\quad\quad\quad\quad\quad\quad\quad\quad+\int_{-w}^{w}D^{-}(z,w)\beta(z,t)\mathrm{d}z\label{3:eq34}\\
\alpha(-1,t)=\beta(1,t)=0,\label{3:eq35}
\end{numcases}
where $p(w)\in C([-1,1])$; $D^{+}(z,w)$ and $D^{-}(z,w)\in L^{\infty}(E)$ will be determined later; initial conditions $\alpha(w,0)=\alpha_{0}(w)$ and $\beta(w,0)=\beta_{0}(w)$ belong to $L^{2}([-1,1])$. 
\begin{remark}\label{3:rem5}
The purpose of using a target system different from that in Case 1 is to ensure the well-posedness of the kernel equation. The integral coupling appearing in equation (\ref{3:eq34}) are solely used for control design. Since the integral terms are feedforward terms, it does not affect the stability of the target system.
\end{remark}
\begin{lemma}\label{3:lem4}
System (\ref{3:eq33})-(\ref{3:eq35}) reaches its zero equilibrium state in finite time.
\end{lemma}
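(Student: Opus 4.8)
The plan is to exploit the \emph{cascade} structure of the target system: the $\alpha$-dynamics are autonomous and decay in finite time on their own, and they then drive the $\beta$-dynamics as a vanishing feedforward input, as anticipated in Remark \ref{3:rem5}.

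First, I would treat the $\alpha$-equation. Equation \eqref{3:eq33} together with the boundary condition $\alpha(-1,t)=0$ in \eqref{3:eq35} is decoupled from $\beta$ and is \emph{identical} to the $\alpha$-part \eqref{3:eq7}, \eqref{3:eq9} of Case~1. Hence the method of characteristics gives the same explicit solution as in Lemma \ref{3:lem1}, namely $\alpha(w,t)=\alpha_{0}(\phi_{1}^{-1}(\phi_{1}(w)-t))$ for $t<\phi_{1}(w)-\phi_{1}(-1)$ and $\alpha(w,t)=0$ afterwards. In particular $\alpha\equiv 0$ on $[-1,1]$ for every $t\geq T_{\alpha}:=\phi_{1}(1)-\phi_{1}(-1)$.

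Next I would pass to the $\beta$-dynamics. For $t\geq T_{\alpha}$ the two $\alpha$-driven source terms $p(w)\alpha(-w,t)$ and $\int_{-w}^{w}D^{+}(z,w)\alpha(z,t)\,\mathrm{d}z$ in \eqref{3:eq34} vanish identically, so $\beta$ obeys the autonomous problem
\begin{align*}
\frac{\partial \beta}{\partial t}(w,t)-\mu(w)\frac{\partial \beta}{\partial w}(w,t)=\int_{-w}^{w}D^{-}(z,w)\beta(z,t)\,\mathrm{d}z,\qquad \beta(1,t)=0.
\end{align*}
I would analyse this by integrating along the characteristics $\mathrm{d}w/\mathrm{d}t=-\mu(w)$ (measured by $\phi_{2}$ from \eqref{3:eq11}), which transport the homogeneous datum at the inflow boundary $w=1$ into the interior; this recasts the PDE as a second-kind Volterra integral equation in time for $\beta$ along characteristics, solvable by successive approximations exactly as in the proof of Theorem \ref{3:them1}.

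The main obstacle is the nonlocal self-coupling $\int_{-w}^{w}D^{-}(z,w)\beta(z,t)\,\mathrm{d}z$: for a generic bounded kernel a zero-inflow transport equation perturbed by such a term would in general be only exponentially stable, so the \emph{finite-time} claim must use more than boundedness of $D^{-}$. I would resolve this by combining the finite propagation speed with the orientation of the integration domain relative to the $\beta$-transport, and show through the Duhamel/successive-approximation series that the associated solution operator is nilpotent, i.e. $\beta\equiv 0$ once $t$ exceeds the characteristic transit time $T_{\beta}:=\phi_{2}(1)-\phi_{2}(-1)$. Together with the first step this yields $\alpha\equiv\beta\equiv 0$ for all $t\geq T_{\alpha}+T_{\beta}$, establishing finite-time convergence; this is also the rigorous content behind the feedforward interpretation of Remark \ref{3:rem5}, since $\beta$ never feeds back into $\alpha$.
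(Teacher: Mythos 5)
Your overall route is the paper's route: the paper's proof of Lemma \ref{3:lem4} likewise notes that the $\alpha$-subsystem is the same pure transport problem as in Case 1, concludes $\alpha\equiv 0$ for $t\geq \phi_{1}(1)-\phi_{1}(-1)$, reduces \eqref{3:eq34} to $\beta_{t}-\mu(w)\beta_{w}=\int_{-w}^{w}D^{-}(z,w)\beta(z,t)\,\mathrm{d}z$ with $\beta(1,t)=0$, and then asserts $\beta\equiv 0$ after one further transit time, giving the same total time $\phi_{3}(1)-\phi_{3}(-1)=T_{\alpha}+T_{\beta}$ that you obtain. You deserve credit for explicitly flagging the crux that the paper passes over in a single phrase (``similar to Lemma \ref{3:lem1}''): the residual $\beta$-equation is \emph{not} a pure transport equation, and the $\int_{-w}^{w}D^{-}\beta$ term is a nonlocal self-feedback, not a feedforward term as Remark \ref{3:rem5} suggests.

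The genuine gap is that your proposed resolution of this crux --- nilpotency of the Duhamel/successive-approximation operator ``from the orientation of the integration domain relative to the $\beta$-transport'' --- fails for precisely this orientation. The $\beta$-characteristics run from the inflow $w=1$ leftward to $w=-1$, so for $w>0$ the integral $\int_{-w}^{w}D^{-}(z,w)\beta(z,t)\,\mathrm{d}z$ gathers values at points $z<w$ lying \emph{downstream} and re-injects them \emph{upstream}: a value at $z>0$ instantaneously influences the equation at every $w\in[z,1]$, and that contribution is transported back through $z$ after time $\phi_{2}(w)-\phi_{2}(z)$, closing an influence loop whose period can be made arbitrarily short by taking $w$ close to $z$. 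Hence the $n$-th Picard iterate is supported up to arbitrarily large times as $n$ grows, the solution operator is not nilpotent, and $\beta\equiv 0$ at $t=T_{\beta}$ cannot follow from finite propagation speed and orientation alone. Indeed, for a generic bounded kernel the claim is not merely unproved but false: with $\mu\equiv 1$ and $D^{-}=\kappa$ on the box $\{0.4\leq z\leq 0.5\leq w\leq 0.6\}\subset E$ one obtains a delayed positive feedback that is exponentially \emph{unstable} for large $\kappa$, so even your fallback assertion of generic exponential stability is too optimistic. A correct proof would have to exploit structural properties of the specific $D^{-}$ produced by \eqref{3:eq42}--\eqref{3:eq45} (or redesign the target so that the $\beta$-equation carries no $\beta$-self-coupling); your sketch does not supply this step --- and, it should be said, neither does the paper's own one-line argument, so the obstacle you identified is a real issue with Lemma \ref{3:lem4} as proved, not just with your attempt.
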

\begin{proof}
Similar to Lemma \ref{3:lem1}, first, when $t\geq \phi_{1}(1)-\phi_{1}(-1)$, $\alpha\equiv0$. Thus, equation (\ref{3:eq34}) is written as
\begin{align*}
\frac{\partial \beta}{\partial t}(w,t)-\mu(w)\frac{\partial \beta}{\partial w}(w,t)=\int_{-w}^{w}D^{-}(z,w)\beta(z,t)\mathrm{d}z.
\end{align*}
Similar to Lemma \ref{3:lem1},  when $t\geq \phi_{1}(1)-\phi_{1}(-1)+\phi_{2}(1)-\phi_{2}(-1)=\phi_{3}(1)-\phi_{3}(-1)$, $\beta\equiv0$.
\end{proof}

Using the same backstepping transformation (\ref{3:eq12})-(\ref{3:eq13}) to map original system(\ref{3:eq1})-(\ref{3:eq3}) to target system (\ref{3:eq33})-(\ref{3:eq35}), and similarly, two uncoupled kernel PDEs can be obtained as follows.

First, kernels $L^{11}$ and $L^{12}$ safisfy 
\begin{align}
\lambda(w)L_{w}^{11}(z,w)+\lambda(z)L_{z}^{11}(z,w)=-\lambda^{\prime}(z)L^{11}(z,w)-c(z)L^{12}(z,w), \label{3:eq36}\\
\lambda(w)L_{w}^{12}(z,w)-\mu(z)L_{z}^{12}(z,w)=-b(z)L^{11}(z,w)+\mu^{\prime}(z)L^{12}(z,w), \label{3:eq37}
\end{align}
with boundary conditions
\begin{align}
L^{11}(-w,w)=0,\quad L^{12}(w,w)=\frac{b(w)}{\lambda(w)+\mu(w)}\doteq h_{1}(w), \quad L^{12}(-w,w)=0. \label{3:eq38}
\end{align}
\begin{assumption}\label{3:assum2}
$h_{1}$ satisfies the $C^{0}$ compatibility condition at point $(z, w ) = (0, 0)$, i.e., $h_{1}(0)=0$.
\end{assumption}

Second, kernels $L^{21}$ and $L^{22}$ safisfy
\begin{align}
\mu(w)L_{w}^{22}(z,w)+\mu(z)L_{z}^{22}(z,w)=b(z)L^{21}(z,w)-\mu^{\prime}(z)L^{22}(z,w), \label{3:eq39}\\
\mu(w)L_{w}^{21}(z,w)-\lambda(z)L_{z}^{21}(z,w)=\lambda^{\prime}(z)L^{21}(z,w)+c(z)L^{22}(z,w), \label{3:eq40}
\end{align}
with boundary conditions
\begin{align}
L^{22}(-w,w)=0,\quad\quad L^{21}(w,w)=-\frac{c(w)}{\lambda(w)+\mu(w)}\doteq h_{2}(w). \label{3:eq41}
\end{align}
Both equations (\ref{3:eq36})-(\ref{3:eq38}) and (\ref{3:eq39})-(\ref{3:eq41}) evolve separately in domain $E=\{(z,w)\in \mathbb{R}^{2}|-1\leq-w\leq z\leq w\leq 1\}$.

Besides, $p(w)$ is given by
\begin{align*}
p(w)=(\lambda(-w)-\mu(w))L^{21}(-w,w).
\end{align*}
Furthermore, the following equations are obtained for $D^{+}$ and $D^{-}$:\\
for $z\in[0,w]$
\begin{align*}
-p(w)L^{11}(z,-w)=&D^{+}(z,w)-\int_{z}^{w}D^{+}(s,w)L^{11}(z,s)+D^{-}(s,w)L^{21}(z,s)\mathrm{d}s\\
&-\int_{-z}^{-w}D^{+}(s,w)L^{11}(z,s)+D^{-}(s,w)L^{21}(z,s)\mathrm{d}s, \\
-p(w)L^{12}(z,-w)=&D^{-}(z,w)-\int_{z}^{w}D^{+}(s,w)L^{12}(z,s)+D^{-}(s,w)L^{22}(z,s)\mathrm{d}s\\
&-\int_{-z}^{-w}D^{+}(s,w)L^{12}(z,s)+D^{-}(s,w)L^{22}(z,s)\mathrm{d}s,
\end{align*}
for $z\in[-w,0]$
\begin{align*}
-p(w)L^{11}(z,-w)=&D^{+}(z,w)-\int_{-z}^{w}D^{+}(s,w)L^{11}(z,s)+D^{-}(s,w)L^{21}(z,s)\mathrm{d}s\\
&-\int_{z}^{-w}D^{+}(s,w)L^{11}(z,s)+D^{-}(s,w)L^{21}(z,s)\mathrm{d}s, \\
-p(w)L^{12}(z,-w)=&D^{-}(z,w)-\int_{-z}^{w}D^{+}(s,w)L^{12}(z,s)+D^{-}(s,w)L^{22}(z,s)\mathrm{d}s\\
&-\int_{z}^{-w}D^{+}(s,w)L^{12}(z,s)+D^{-}(s,w)L^{22}(z,s)\mathrm{d}s.
\end{align*}
Through a simple coordinate transformation, for $z\in[0,w]$, one can get
\begin{align}
-p(w)L^{11}(z,-w)=&D^{+}(z,w)-\int_{z}^{w}[D^{+}(s,w)L^{11}(z,s)+D^{-}(s,w)L^{21}(z,s)\nonumber\\
&\quad\quad\quad-D^{+}(-s,w)L^{11}(z,-s)-D^{-}(-s,w)L^{21}(z,-s)]\mathrm{d}s, \label{3:eq42}\\
-p(w)L^{12}(z,-w)=&D^{-}(z,w)-\int_{z}^{w}[D^{+}(s,w)L^{12}(z,s)+D^{-}(s,w)L^{22}(z,s)\nonumber\\
&\quad\quad\quad-D^{+}(-s,w)L^{12}(z,-s)-D^{-}(-s,w)L^{22}(z,-s)]\mathrm{d}s,\label{3:eq43}\\
-p(w)L^{11}(-z,-w)=&D^{+}(-z,w)-\int_{z}^{w}[D^{+}(s,w)L^{11}(-z,s)+D^{-}(s,w)L^{21}(-z,s)\nonumber\\
&\quad-D^{+}(-s,w)L^{11}(-z,-s)-D^{-}(-s,w)L^{21}(-z,-s)]\mathrm{d}s, \label{3:eq44}\\
-p(w)L^{12}(-z,-w)=&D^{-}(-z,w)-\int_{z}^{w}[D^{+}(s,w)L^{12}(-z,s)+D^{-}(s,w)L^{22}(-z,s)\nonumber\\
&\quad-D^{+}(-s,w)L^{12}(-z,-s)-D^{-}(-s,w)L^{22}(-z,-s)]\mathrm{d}s.\label{3:eq45}
\end{align}
\begin{remark}\label{3:rem6}
For every $w\in[-1,1]$, define
\begin{align*}
\begin{cases}
  D^{+}(z,w)=D_{1}(z,w),  \\
  D^{-}(z,w)=D_{2}(z,w),  \\
  D^{+}(-z,w)=D_{3}(z,w), \\
  D^{-}(-z,w)=D_{4}(z,w),
\end{cases}
\end{align*}
where $z\in[0,w]$. It is evident that equations (\ref{3:eq42})-(\ref{3:eq45}) represent classical Volterra integral equations of second kind on interval $[0,w]$, with function $D_{i}(\cdot,w), i=1,2,3,4$ being the variable under consideration. Consequently, provided kernels $L^{11},L^{12},L^{21},L^{22}$ and $p$ are well-defined and bounded, so are $D_{i}, i=1,2,3,4$.
\end{remark}

Next, the well-posedness theorem will provided for kernel $L$.
\begin{theorem}\label{3:them3}
Hyperbolic PDEs (\ref{3:eq36})-(\ref{3:eq41}) exists a unique solution $L^{11},L^{21},L^{22} \in C(E), L^{12}\in L^{\infty}(E).$
\end{theorem}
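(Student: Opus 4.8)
The plan is to follow the architecture of the proof of Theorem \ref{3:them1} — reduction to the subdomain $E_{1}$ by the same symmetry argument, conversion of the kernel PDEs to Volterra integral equations along characteristics, and resolution by successive approximations — while isolating the one genuinely new feature: the extra boundary condition $L^{12}(-w,w)=0$ in (\ref{3:eq38}). Since the pair (\ref{3:eq39})--(\ref{3:eq41}) governing $L^{21},L^{22}$ carries a single inflow datum each, exactly as in Case 1, their integral equations, the increment bounds of Lemmas \ref{3:lem2}--\ref{3:lem3}, and the continuity induction transfer verbatim; this already yields $L^{21},L^{22}\in C(E)$. Likewise $L^{11}$ in (\ref{3:eq36}) keeps its single datum $L^{11}(-w,w)=0$ and stays continuous once $L^{12}$ is known to be bounded, because it is then obtained by integrating a continuous-plus-bounded source. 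Thus the whole difficulty is concentrated in $L^{12}$.

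First I would justify that the second condition $L^{12}(-w,w)=0$ is \emph{admissible}, i.e.\ that the left diagonal $z=-w$ is an inflow boundary for the $L^{12}$ transport operator precisely under the Case 2 hypothesis. Along the $L^{12}$ characteristics (\ref{3:eq25})--(\ref{3:eq26}) one computes $\frac{\mathrm{d}}{\mathrm{d}t}(z_{2}+w_{2})=\lambda(w_{2})-\mu(z_{2})$, which on $z=-w$ equals $\lambda(w)-\mu(-w)>0$ by (\ref{3:eq5}). Hence characteristics cross $z=-w$ from outside into $E_{1}$, so a datum may, and indeed must, be prescribed there; in Case 1 the same quantity vanishes and the left diagonal is itself characteristic, which is why no second datum appears in (\ref{3:eq16}). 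The characteristic issuing from the corner $(0,0)$ then partitions $E_{1}$ into a region $\mathcal{R}$ whose backward characteristics reach the right diagonal $z=w$ (datum $h_{1}$, as in Theorem \ref{3:them1}) and a region $\mathcal{L}$ whose backward characteristics reach the left diagonal $z=-w$ (datum $0$). On each region I would write the corresponding integral equation for $L^{12}$: on $\mathcal{R}$ it is the one already derived in Theorem \ref{3:them1}, while on $\mathcal{L}$ the boundary term is absent and the $t$-integration runs from the left diagonal.

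With these two integral equations in hand, existence, uniqueness and boundedness follow by the successive-approximation scheme of Theorem \ref{3:them1}: the operator $\mathbf{T}$ acts on the piecewise-defined iterate, the factorial-decay estimate of Lemma \ref{3:lem3} (resting on the elementary bound of Lemma \ref{3:lem2}, which still holds since $w_{2}\le w$ along the relevant characteristic) applies on $\mathcal{R}$ and on $\mathcal{L}$ separately, and summation gives a uniform bound of the form $\bar{h}\,e^{abw}$ as in (\ref{3:eq30}). This produces a unique bounded solution, whence $L^{12}\in L^{\infty}(E)$; uniqueness again follows from linearity by setting $\bar{h}=0$. The role of Assumption \ref{3:assum2} is to match the two data across the dividing characteristic at the corner: the right-diagonal contribution tends to $h_{1}(0)$ while the left-diagonal contribution is $0$, so $h_{1}(0)=0$ renders them consistent at $(0,0)$.

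The main obstacle is exactly this $L^{12}$ analysis: verifying that the left diagonal is inflow (the sign computation above), constructing the two-region Volterra formulation consistently, and checking that the contraction estimates survive the piecewise definition. This is also the reason why only $L^{\infty}(E)$ — rather than $C(E)$ — is asserted for $L^{12}$: unlike the other three kernels, $L^{12}$ is built from two distinct inflow data glued along the characteristic through $(0,0)$, so global continuity is not automatic and is established only in the interior of $\mathcal{R}$ and of $\mathcal{L}$, with $C^{0}$ compatibility at the corner secured by Assumption \ref{3:assum2}; the robust conclusion that survives in general is boundedness. A final bookkeeping remark, not part of the kernel well-posedness proper, is that once the four kernels are bounded the auxiliary functions $p$ and $D^{\pm}$ inherit boundedness through the second-kind Volterra equations (\ref{3:eq42})--(\ref{3:eq45}), as already noted in Remark \ref{3:rem6}.
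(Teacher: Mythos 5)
Your proposal is correct and follows essentially the same route as the paper: reduction to $E_{1}$ by symmetry, reuse of the $L^{11}$ characteristics, and a two-region treatment of $L^{12}$ split along the characteristic through $(0,0)$ — your $\mathcal{R}$ and $\mathcal{L}$ are exactly the paper's $\mathcal{T}_{1}=\{\phi_{1}(w)+\phi_{2}(z)\geq 0\}$ and $\mathcal{T}_{2}=\{\phi_{1}(w)+\phi_{2}(z)<0\}$, with the datum $h_{1}$ versus $0$ encoded there by the switch $\delta(z,w)$ in the integral equation, followed by the same successive-approximation and linearity-based uniqueness arguments. Your explicit inflow computation $\frac{\mathrm{d}}{\mathrm{d}t}(z+w)=\lambda(w)-\mu(-w)>0$ on the left diagonal and your reading of Assumption \ref{3:assum2} as matching the two data along the dividing characteristic are sound additions that the paper leaves implicit (cf.\ Remark \ref{3:rem7} and Remark \ref{3:rem8}), not deviations from it.
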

\begin{proof}
Notice that kernel equations (\ref{3:eq39})-(\ref{3:eq41}) share the same structure as (\ref{3:eq17})-(\ref{3:eq19}). Using the method similar to Theorem \ref{3:them1}, the well-posedness of $L^{21}$ and $L^{22}$ is easily obtained. Therefore, we will only provide the proof of well-posedness for kernels $L^{11}$ and $L^{12}$.

Similarly to Theorem \ref{3:them1}, by utilizing the symmetry of $E_{1}=\{(z,w)\in \mathbb{R}^{2}|-w\leq z\leq w, 0\leq w\leq 1\}$ and $E_{2}=\{(z,w)\in \mathbb{R}^{2}|w\leq z\leq -w, -1\leq w\leq 0\}$, the well-posedness only need to be proven in region $E_{1}$.

First, define
\begin{align}\label{3:eq46}
\phi_{4}(w)=\phi_{1}(w)+\phi_{2}(-w)
\end{align}
where $\phi_{1}$ and $\phi_{2}$ are defined by (\ref{3:eq10}) and (\ref{3:eq11}). Note that $\phi_{4}$ is monotonically decreasing, invertible and continuous differentiable.

Characteristics of kernel $L^{11}$: Since equations (\ref{3:eq36}) and (\ref{3:eq14}) have exactly the same structure and the same boundary condition, the characteristic lines defined in (\ref{3:eq21})-(\ref{3:eq24}) will still be used.
\begin{figure}
  \centering
  \includegraphics[width=7cm,height=5.5cm]{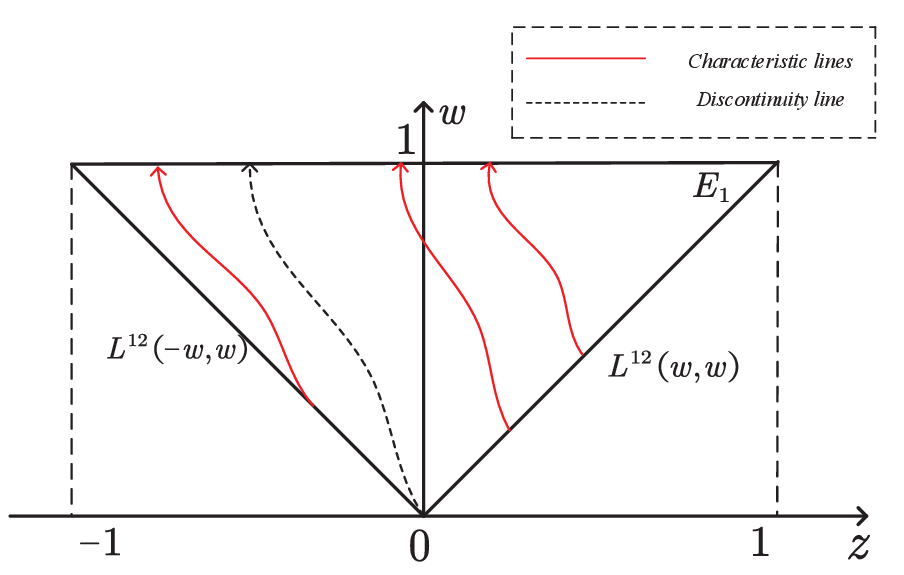}
  \caption{Characteristics lines for kernel $L^{12}$}\label{3:fig2}
\end{figure}

Characteristics of the $L^{12}$ kernel: $\forall (z,w)\in E_{1}$, define the following characteristic lines $(z_{3}(z,w,\cdot),w_{3}(z,w,\cdot))$
\begin{align}
&\begin{cases}\label{3:eq47}
\frac{\mathrm{d}w_{3}}{\mathrm{d}t}(z,w,t)=\lambda(w_{3}(z,w,t)),\quad t\in[0,t_{3}^{F}(z,w)],\\
w_{3}(z,w,0)=w_{3}^{0}(z,w),\quad w_{3}(z,w,t_{3}^{F}(z,w))=w,
\end{cases}\\
&\begin{cases}\label{3:eq48}
\frac{\mathrm{d}z_{3}}{\mathrm{d}t}(z,w,t)=-\mu(z_{3}(z,w,t)),\quad t\in[0,t_{3}^{F}(z,w)],\\
z_{3}(z,w,0)=z_{3}^{0}(z,w)=\epsilon(z,w)w_{3}^{0}(z,w),\quad z_{3}(z,w,t_{3}^{F}(z,w))=z,
\end{cases}
\end{align}
where
\begin{align*}
\epsilon(z,w)=
\begin{cases}
1, \quad\quad (z,w)\in \mathcal{T}_{1},\\
-1, \quad~ (z,w)\in \mathcal{T}_{2},
\end{cases}
\end{align*}
with
\begin{align*}
\begin{cases}
\mathcal{T}_{1}=\{(z,w)\in E_{1}|\phi_{1}(w)+\phi_{2}(z)\geq 0\},\\
\mathcal{T}_{2}=\{(z,w)\in E_{1}|\phi_{1}(w)+\phi_{2}(z)< 0\}.
\end{cases}
\end{align*}

\begin{remark}\label{3:rem7}
It can be seen through direct calculation that the equation for the discontinuity line in Fig. \ref{3:fig2} is $\phi_{1}(w)+\phi_{2}(z)=0$.
\end{remark}

These lines (refer to Fig. \ref{3:fig2}) start at coordinate $(z_{3}^{0}(z,w),w_{3}^{0}(z,w))$ and end at position $(z,w)$. Further, we also can calculate that
\begin{align}\label{3:eq49}
w_{3}(z,w,t)=
\begin{cases}
\phi_{1}^{-1}(t+\phi_{1}(\phi_{3}^{-1}(\phi_{1}(w)+\phi_{2}(z)))),\quad  (z,w)\in \mathcal{T}_{1}\\
\phi_{1}^{-1}(t+\phi_{1}(\phi_{4}^{-1}(\phi_{1}(w)+\phi_{2}(z)))),\quad  (z,w)\in \mathcal{T}_{2}
\end{cases}
\end{align}
\begin{align}\label{3:eq50}
z_{3}(z,w,t)=
\begin{cases}
\phi_{2}^{-1}(-t+\phi_{2}(-\phi_{3}^{-1}(\phi_{1}(w)+\phi_{2}(z)))),\quad  (z,w)\in \mathcal{T}_{1}\\
\phi_{2}^{-1}(-t+\phi_{2}(-\phi_{4}^{-1}(\phi_{1}(w)+\phi_{2}(z)))),\quad  (z,w)\in \mathcal{T}_{2}
\end{cases}
\end{align}
where $t\in[0,t_{3}^{F}(z,w)]$ with
\begin{align}\label{3:eq51}
t_{3}^{F}(z,w)=
\begin{cases}
\phi_{1}(w)-\phi_{1}(\phi_{3}^{-1}(\phi_{1}(w)+\phi_{2}(z))),\quad(z,w)\in \mathcal{T}_{1}\\
\phi_{1}(w)-\phi_{1}(\phi_{4}^{-1}(\phi_{1}(w)+\phi_{2}(z))),\quad(z,w)\in \mathcal{T}_{2}
\end{cases}
\end{align}

By integrating kernel equations (\ref{3:eq36}) and (\ref{3:eq37}) along with their respective characteristic lines and utilizing boundary conditions (\ref{3:eq38}), it is obtained 
\begin{align*}
L^{11}(z,w)=\int_{0}^{t_{1}^{F}(z,w)}&[-\lambda^{\prime}(z_{1}(z,w,t))L^{11}(z_{1}(z,w,t),w_{1}(z,w,t))\\
&-c(z_{1}(z,w,t))L^{12}(z_{1}(z,w,t),w_{1}(z,w,t))]\mathrm{d}t
\end{align*}
and
\begin{align*}
L^{12}(z,w)=&(1-\delta(z,w))h_{1}(\phi^{-1}_{3}(\phi_{1}(w)+\phi_{2}(z)))\\
&+\int_{0}^{t_{3}^{F}(z,w)}\!\!\!\!\![-b(z_{3}(z,w,t))L^{11}(z_{3}(z,w,t),w_{3}(z,w,t))\\
&\quad\quad\quad+\mu^{\prime}(z_{3}(z,w,t))L^{12}(z_{3}(z,w,t),w_{3}(z,w,t))]\mathrm{d}t,
\end{align*}
where
\begin{align}
\delta(z,w)=
\begin{cases}
0,\quad(z,w)\in \mathcal{T}_{1}\\
1,\quad(z,w)\in \mathcal{T}_{2}.
\end{cases}
\end{align}

With the similar method (successive approximations) of Theorem \ref{3:them1}, one can get the well-posedness of kernel $L$, so the specifics are omitted here.
\end{proof}

\begin{remark}\label{3:rem8}
Kernel $L^{12}$ cannot be continuous across the entire region $E$ due to the emergence of discontinuous lines (see Fig. \ref{3:fig2}). Indeed, kernel $L^{12}$ only loses its continuity on the discontinuous lines while remaining continuous elsewhere.
\end{remark}

We are now ready to state the stabilization result of Section \ref{3:subsec3.2} as follows.
\begin{theorem}\label{3:them4}
Consider system (\ref{3:eq1})-(\ref{3:eq3}) with the following feedback control laws
\begin{align}
  U_{1}(t) & =-\int_{-1}^{1}u(z,t)L^{11}(z,-1)\mathrm{d}z-\int_{-1}^{1}v(z,t)L^{12}(z,-1)\mathrm{d}z, \label{3:eq53}\\
  U_{2}(t) & =\int_{-1}^{1}u(z,t)L^{21}(z,1)\mathrm{d}z+\int_{-1}^{1}v(z,t)L^{22}(z,1)\mathrm{d}z. \label{3:eq54}
\end{align}
For any $(u_{0},v_{0})\in L^{2}([0,1])$, the zero equilibrium state can be reached in finite time.
\end{theorem}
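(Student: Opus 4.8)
The plan is to follow the template of Theorem~\ref{3:them2}: recover the feedback laws by evaluating the backstepping transformation at the two actuated boundaries, transport the finite-time convergence of the target system back to the plant through the inverse transformation, and invoke Lemma~\ref{3:lem4}. The one substantive difference from Case~1 is that the target system now carries the feedforward coupling of (\ref{3:eq34}) and that, by Theorem~\ref{3:them3}, the kernel $L^{12}$ is only bounded rather than continuous; the argument must therefore be checked to survive this weaker regularity.

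First I would derive the control laws. Evaluating (\ref{3:eq12}) at $w=-1$, so that $\int_{-w}^{w}=\int_{1}^{-1}=-\int_{-1}^{1}$, and imposing the target boundary condition $\alpha(-1,t)=0$, one obtains
\begin{align*}
0=\int_{-1}^{1}\big[u(z,t)L^{11}(z,-1)+v(z,t)L^{12}(z,-1)\big]\mathrm{d}z+u(-1,t).
\end{align*}
Since $u(-1,t)=U_{1}(t)$, this rearranges to (\ref{3:eq53}). Evaluating (\ref{3:eq13}) at $w=1$ with $\beta(1,t)=0$ and $v(1,t)=U_{2}(t)$ produces (\ref{3:eq54}) in the same manner. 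These are precisely the boundary feedbacks under which the transformed state $(\alpha,\beta)$ solves the target system (\ref{3:eq33})-(\ref{3:eq35}), the interior kernel PDEs (\ref{3:eq36})-(\ref{3:eq41}) and the coupling relations for $p$, $D^{+}$, $D^{-}$ having been arranged exactly so that all residual terms cancel.

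Next I would invoke invertibility. By Theorem~\ref{3:them3} the kernels $L^{11},L^{21},L^{22}\in C(E)$ and $L^{12}\in L^{\infty}(E)$, so all four are bounded. Since (\ref{3:eq12})-(\ref{3:eq13}) is a Volterra transformation of the second kind, Remark~\ref{3:rem4} guarantees that boundedness of the kernel already suffices for the existence of a bounded inverse; the loss of continuity of $L^{12}$ recorded in Remark~\ref{3:rem8} does not obstruct this step. Hence the closed-loop plant and the target system share identical dynamics, and $(u,v)\mapsto(\alpha,\beta)$ is an isomorphism on $L^{2}([-1,1])$.

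Finally I would transfer convergence. Lemma~\ref{3:lem4} shows that, despite the feedforward coupling in (\ref{3:eq34})---which by Remark~\ref{3:rem5} leaves the target dynamics stable---the target state vanishes for all $t\ge\phi_{3}(1)-\phi_{3}(-1)$. Pulling this identity back through the bounded inverse transformation yields $(u,v)\equiv 0$ on the same horizon, establishing finite-time convergence to the zero equilibrium. I expect the main obstacle to be the regularity bookkeeping around the discontinuous kernel $L^{12}$: one must be sure that both the Volterra inverse and the pullback of $(\alpha,\beta)\equiv0$ remain valid in the $L^{\infty}$/$L^{2}$ setting rather than the $C(E)$ setting used in Case~1, which is exactly what Remark~\ref{3:rem4} together with the $L^{\infty}$ bound of Theorem~\ref{3:them3} is designed to supply.
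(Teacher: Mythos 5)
Your proposal is correct and follows essentially the same route as the paper, whose proof of Theorem~\ref{3:them4} simply refers back to Theorem~\ref{3:them2}: evaluate the transformation (\ref{3:eq12})--(\ref{3:eq13}) at $w=-1$ and $w=1$ to recover (\ref{3:eq53})--(\ref{3:eq54}), use the invertibility of the Volterra transformation (Remark~\ref{3:rem4}, which needs only bounded kernels, so the merely $L^{\infty}$ kernel $L^{12}$ from Theorem~\ref{3:them3} is not an obstacle), and transfer the finite-time convergence of Lemma~\ref{3:lem4} back to $(u,v)$. Your explicit attention to the regularity bookkeeping for the discontinuous $L^{12}$ is a welcome elaboration of what the paper leaves implicit, but it does not constitute a different approach.
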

\begin{proof}
Similar to the proof of Theorem \ref{3:them2}, the details are omitted here.
\end{proof}

\subsection{Case 3: $\lambda(w)<\mu(-w)$}\label{3:subsec3.3}
The discussion in this section is entirely similar to that in Section \ref{3:subsec3.2}, with only the corresponding adjustment to the target system:
\begin{numcases}{}
\frac{\partial \alpha}{\partial t}(w,t)+\lambda(w)\frac{\partial \alpha}{\partial w}(w,t)=q(w)\beta(-w,t)+\int_{-w}^{w}K^{+}(z,w)\alpha(z,t)\mathrm{d}z\nonumber\\
\quad\quad\quad\quad\quad\quad\quad\quad\quad\quad\quad\quad\quad\quad\quad\quad\quad+\int_{-w}^{w}K^{-}(z,w)\beta(z,t)\mathrm{d}z\label{3:eq55}\\
\frac{\partial \beta}{\partial t}(w,t)-\mu(w)\frac{\partial \beta}{\partial w}(w,t)=0\label{3:eq56}\\
\alpha(-1,t)=\beta(1,t)=0,\label{3:eq57}
\end{numcases}
where $q(w)\in C([-1,1])$; $K^{+}(z,w)$ and $K^{-}(z,w)\in L^{\infty}(E)$ will be determined later; initial conditions $\alpha(w,0)=\alpha_{0}(w)$ and $\beta(w,0)=\beta_{0}(w)$ belong to $L^{2}([-1,1])$. 
\begin{remark}\label{3:rem9}
Similar to Lemma \ref{3:lem4}, target system (\ref{3:eq55})-(\ref{3:eq57}) converges to the zero equilibrium state in finite time can also be concluded.
\end{remark}

Using the same backstepping transformation (\ref{3:eq12})-(\ref{3:eq13}) to obtain kernel equations similar to (\ref{3:eq36})-(\ref{3:eq41}), and can further derive
\begin{align*}
q(w)=(\lambda(w)-\mu(-w))L^{12}(-w,w).
\end{align*}

Furthermore, the following equations are obtained for $K^{+}$ and $K^{-}$:\\
for $z\in[0,w]$
\begin{align*}
-q(w)L^{21}(z,-w)=&K^{+}(z,w)-\int_{z}^{w}K^{+}(s,w)L^{11}(z,s)+K^{-}(s,w)L^{21}(z,s)\mathrm{d}s\\
&-\int_{-z}^{-w}K^{+}(s,w)L^{11}(z,s)+K^{-}(s,w)L^{21}(z,s)\mathrm{d}s, \\
-q(w)L^{22}(z,-w)=&K^{-}(z,w)-\int_{z}^{w}K^{+}(s,w)L^{12}(z,s)+K^{-}(s,w)L^{22}(z,s)\mathrm{d}s\\
&-\int_{-z}^{-w}K^{+}(s,w)L^{12}(z,s)+K^{-}(s,w)L^{22}(z,s)\mathrm{d}s,
\end{align*}
for $z\in[-w,0]$
\begin{align*}
-q(w)L^{21}(z,-w)=&K^{+}(z,w)-\int_{-z}^{w}K^{+}(s,w)L^{11}(z,s)+K^{-}(s,w)L^{21}(z,s)\mathrm{d}s\\
&-\int_{z}^{-w}K^{+}(s,w)L^{11}(z,s)+K^{-}(s,w)L^{21}(z,s)\mathrm{d}s, \\
-q(w)L^{22}(z,-w)=&K^{-}(z,w)-\int_{-z}^{w}K^{+}(s,w)L^{12}(z,s)+K^{-}(s,w)L^{22}(z,s)\mathrm{d}s\\
&-\int_{z}^{-w}K^{+}(s,w)L^{12}(z,s)+K^{-}(s,w)L^{22}(z,s)\mathrm{d}s.
\end{align*}
The uniqueness and boundedness of $K^{+}$ and $K^{-}$ can be obtained similarly to Remark \ref{3:rem6}.

The well-posedness of the kernel $L$ is similar to Theorem \ref{3:them3}, and will not be repeated here. However, for the sake of completeness of this paper, the control laws will be directly provided here. 
\begin{theorem}\label{3:them5}
Consider system (\ref{3:eq1})-(\ref{3:eq3}) with the following feedback control law:
\begin{align}
  U_{1}(t) & =-\int_{-1}^{1}u(z,t)L^{11}(z,-1)\mathrm{d}z-\int_{-1}^{1}v(z,t)L^{12}(z,-1)\mathrm{d}z, \label{3:eq58}\\
  U_{2}(t) & =\int_{-1}^{1}u(z,t)L^{21}(z,1)\mathrm{d}z+\int_{-1}^{1}v(z,t)L^{22}(z,1)\mathrm{d}z. \label{3:eq59}
\end{align}
For any $(u_{0},v_{0})\in L^{2}([0,1])$, the zero equilibrium state can be reached in finite time.
\end{theorem}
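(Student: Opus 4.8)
The plan is to repeat the three-step argument of Theorem~\ref{3:them2}, now assembling the ingredients already put in place for Case~3. The backstepping transformation (\ref{3:eq12})--(\ref{3:eq13}) maps the plant (\ref{3:eq1})--(\ref{3:eq3}) onto the target system (\ref{3:eq55})--(\ref{3:eq57}) exactly when the kernels solve the Case~3 analogues of (\ref{3:eq36})--(\ref{3:eq41}), whose well-posedness is granted by the argument announced before the statement (mirroring Theorem~\ref{3:them3}). The feedback laws (\ref{3:eq58})--(\ref{3:eq59}) are then read off algebraically, and finite-time convergence is inherited from the target dynamics through invertibility of the transformation.

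First I would derive the control laws by evaluating the transformation at the actuated boundaries. Setting $w=-1$ in (\ref{3:eq12}), the limits $-w$ and $w$ become $1$ and $-1$; reversing the orientation of the integral absorbs the resulting sign, and imposing $\alpha(-1,t)=0$ from (\ref{3:eq57}) together with $u(-1,t)=U_{1}(t)$ from (\ref{3:eq3}) isolates (\ref{3:eq58}). An identical computation with $w=1$ in (\ref{3:eq13}), using $\beta(1,t)=0$ and $v(1,t)=U_{2}(t)$, yields (\ref{3:eq59}). Nothing beyond boundedness of the four kernels is needed at this step, and that is exactly what the well-posedness result supplies.

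Next I would invoke Remark~\ref{3:rem4}: since (\ref{3:eq12})--(\ref{3:eq13}) is a Volterra transformation of the second kind with bounded kernels, it possesses a bounded inverse, so the closed loop (\ref{3:eq1})--(\ref{3:eq3}) under (\ref{3:eq58})--(\ref{3:eq59}) and the target system (\ref{3:eq55})--(\ref{3:eq57}) share the same dynamics. By Remark~\ref{3:rem9} the target state reaches the origin in finite time: the pure transport equation (\ref{3:eq56}) forces $\beta\equiv0$ after $\phi_{2}(1)-\phi_{2}(-1)$, whereupon (\ref{3:eq55}) loses its $\beta$-dependent terms and the remaining integro-differential equation in $\alpha$ is driven to zero in finite additional time by the argument of Lemma~\ref{3:lem4}, the total settling time being $\phi_{3}(1)-\phi_{3}(-1)$. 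Carrying this back through the inverse transformation forces $(u,v)$ to the zero equilibrium in finite time, which completes the proof.

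The only point that genuinely needs care --- and it is the same subtlety as in Case~2 --- is that $L^{12}$ is merely in $L^{\infty}(E)$ rather than continuous, owing to the discontinuity line $\phi_{1}(w)+\phi_{2}(z)=0$ flagged in Remark~\ref{3:rem8}. I would emphasise that this does not damage the argument, because Remark~\ref{3:rem4} requires only a bounded kernel for the inverse to exist, and boundedness of $L^{12}$ is precisely what the Case~3 well-posedness theorem delivers. Everything else is word-for-word the reasoning of Theorem~\ref{3:them2}, so I do not anticipate any new obstacle at the level of the stabilization statement itself.
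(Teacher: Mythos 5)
Your proposal follows exactly the route the paper intends: the paper's own proof of Theorem \ref{3:them5} is a one-line reference back to Theorem \ref{3:them2}, and your three steps --- evaluating (\ref{3:eq12})--(\ref{3:eq13}) at $w=-1$ and $w=1$ with $\alpha(-1,t)=\beta(1,t)=0$ to read off (\ref{3:eq58})--(\ref{3:eq59}), invertibility of the Volterra transformation via Remark \ref{3:rem4}, and finite-time convergence of the target (\ref{3:eq55})--(\ref{3:eq57}) via Remark \ref{3:rem9} (with $\beta$ settling first by pure transport and $\alpha$ settling afterwards, total time $\phi_{3}(1)-\phi_{3}(-1)$) --- are precisely that argument spelled out. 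One detail is misattributed, however: Case 3 is the mirror image of Case 2, so the overdetermined kernel is $L^{21}$, not $L^{12}$; indeed the fact that $q(w)=(\lambda(w)-\mu(-w))L^{12}(-w,w)$ retains the trace $L^{12}(-w,w)$ (rather than setting it to zero, as Case 2 does in (\ref{3:eq38})) shows that here the extra zero boundary condition falls on $L^{21}$, whose characteristics can now exit through the line $z=-w$ since $\lambda(-w)<\mu(w)$; consequently it is $L^{21}$ that is merely $L^{\infty}(E)$, with discontinuity line $\phi_{2}(w)+\phi_{1}(z)=0$, while $L^{12}$ remains continuous, and the relevant compatibility condition is $h_{2}(0)=0$ rather than Assumption \ref{3:assum2}. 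This slip is harmless to the stabilization argument itself, since --- as you correctly emphasize --- Remark \ref{3:rem4} only requires boundedness of the kernels, which the Case-3 analogue of Theorem \ref{3:them3} supplies regardless of which kernel carries the discontinuity.
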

\begin{proof}
Similar to the proof of Theorem \ref{3:them2}, the details are still omitted.
\end{proof}
\begin{remark}\label{3:rem10}
Superficially, the controllers in Theorem \ref{3:them2}, \ref{3:them4} and \ref{3:them5} appear identical, but fundamentally, they differ widely according to the distinct kernel of each controller. It can be seen that Case 1 has the optimal control effect, with a shorter convergence time to zero equilibrium state than that of Case 2 and 3.
\end{remark}

\section{Numerical Simulation}\label{3:sec4}
In this section, a simulation example will be conducted to illustrate the correctness of our theory. The numerical example is illustrated as follows
\begin{align}\label{3:eq60}
\begin{cases}
u_{t}+(3+w^{2})u_{w}=(3\mathrm{e}^{3w})v,\\
v_{t}-(2+w^{4})v_{w}=(1+w)u,\\
\end{cases}
\end{align}
where the initial states are chosen as $u_{0}(w)=w^{2}, v_{0}(w)=\mathrm{e}^{w}$.

Controllers (\ref{3:eq53})-(\ref{3:eq54}) are employed to stabilize system (\ref{3:eq60}), where the control kernels use the numerical solution derived from the algorithm in Reference \cite{bib3.35}. 
From Fig. \ref{3:fig3}, it can be seen that the open-loop system (\ref{3:eq60}) is unstable, but the closed-loop system converges in finite time. Figs. \ref{3:fig4} and \ref{3:fig6} respectively show the variations of the states and control signal with respect to time $t$. The aforementioned results indicate the feasibility of our control scheme.

\begin{figure}
  \centering
  \includegraphics[width=7cm,height=5.5cm]{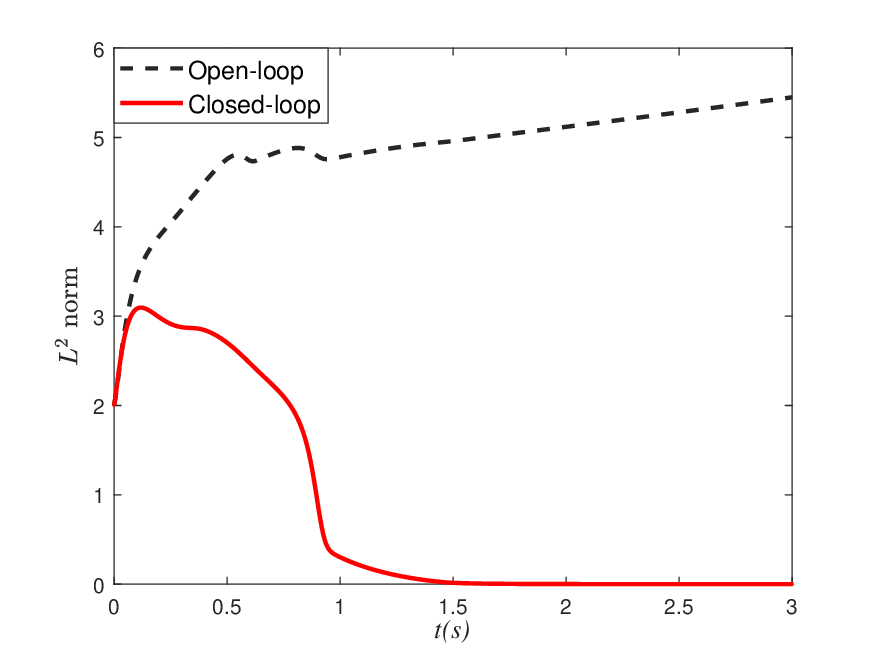}
  \caption{$L^{2}$-norm of open-loop and closed-loop system states}\label{3:fig3}
\end{figure}

\begin{figure}[htbp]
\begin{minipage}[t]{0.5\textwidth}
\centering
\includegraphics[width=\textwidth]{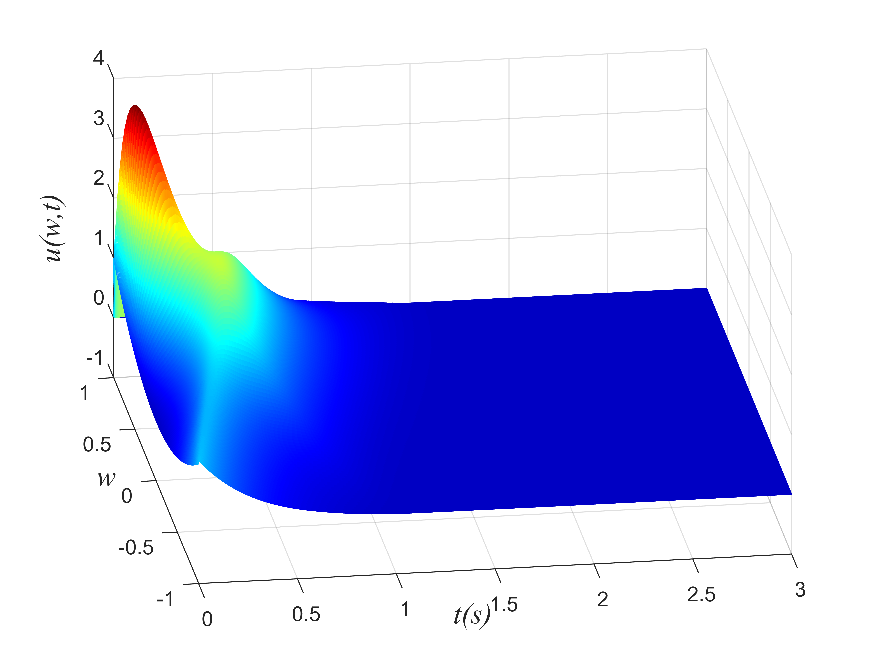}
\caption{State $u$ and $v$}
\end{minipage}
\begin{minipage}[t]{0.5\textwidth}
\centering
\includegraphics[width=\textwidth]{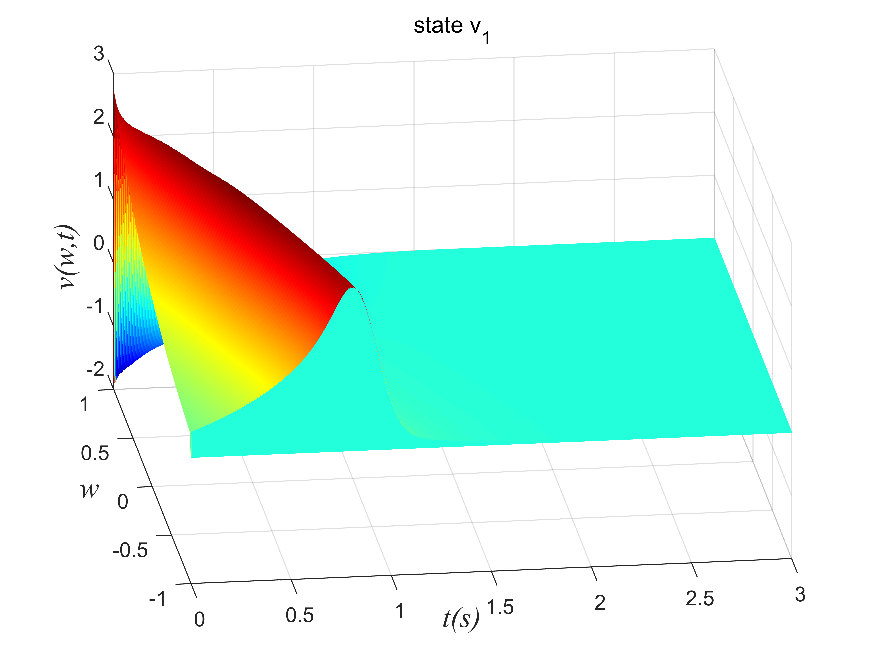}
\end{minipage}
\end{figure}\label{3:fig4}

\begin{figure}
  \centering
  \includegraphics[width=7cm,height=5.5cm]{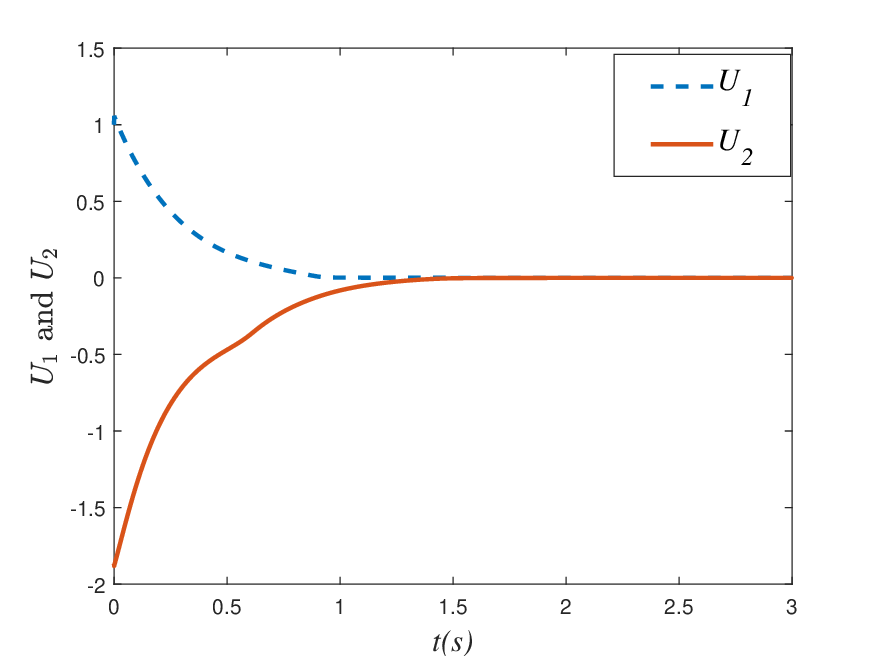}
  \caption{Control signal of $U_{1}$ and $U_{2}$}\label{3:fig6}
\end{figure}

\section{Conclusion}\label{3:sec5}

This paper extends the infinite-dimensional backstepping method for 1-D linear hyperbolic system with two actuators. Using the backstepping method, full-state feedback controllers are respectively designed for three scenarios of transport coefficients, ensuring that the closed-loop system state converges to zero in finite time. Our results provide the potential for more complex designs such as bilateral sensor/actuator output feedback and fault-tolerant designs. 

Future research will focus on achieving bilateral controller design for general $(n+m)\times(n+m)$ heterodirectional coupled hyperbolic systems with spatially varying coefficients. This will make the situation more complex and challenging, but we are confident that it is achievable.

\begin{itemize}
\item Funding

This research is suppported in part by Natural Science Basic Research Program of Shaanxi under Grant no. 2024JC-YBMS538; Xidian University Specially Funded Project for Interdisciplinary Exploration, Grant no. TZJH2024003.

\end{itemize}

\bibliography{sn-bibliography}

\end{document}